\documentclass[12pt,a4paper]{article}
\usepackage[margin=1in]{geometry}
\usepackage{amsmath,amssymb,amsfonts,amsthm,mathtools}
\usepackage[numbers,sort&compress]{natbib}
\usepackage{xcolor}
\usepackage{hyperref}

\definecolor{mycyanblue}{HTML}{0080AC}
\hypersetup{colorlinks=true,allcolors=mycyanblue}

\theoremstyle{plain}
\newtheorem{theorem}{Theorem}[section]
\newtheorem{lemma}[theorem]{Lemma}
\newtheorem{proposition}[theorem]{Proposition}
\newtheorem{corollary}[theorem]{Corollary}
\theoremstyle{definition}
\newtheorem{definition}[theorem]{Definition}
\newtheorem{remark}[theorem]{Remark}

\usepackage{tikz}
\usetikzlibrary{positioning,arrows.meta,calc}

\title{A Characterization of Walk-Matrix Equivalence at Corank Two via Reciprocal WQH Switching}

\author{
 Chaochao Zhu\textsuperscript{a*} \quad
 Qin Yue\textsuperscript{a}\\
{\small
\textsuperscript{a}College of Finance and Mathematics, West Anhui University, Lu'an 237012, China}
}
\date{}

\begin{document}
\maketitle

\begingroup
\renewcommand{\thefootnote}{}
\footnotetext{
*\,Corresponding author. \\
\texttt{E-mail: zccjsbz@amss.ac.cn(C.Zhu), yqer@163.com(Q.Yue)} 
}
\endgroup

\begin{abstract}
Let $G$ be a graph of order $n$ with adjacency matrix $A_G$, let $\mathbf e$ denote the all-one vector, and let \(W_G=[\mathbf e,A_G\mathbf e,\ldots,A_G^{n-1}\mathbf e]\)
be its walk matrix. We consider the case \(\operatorname{rank}W_G=n-2,\) the first corank for which distinct graphs can have the same walk matrix. We give a complete structural description of such pairs. More precisely, if $G$ and $H$ are distinct graphs on the same labelled vertex set and \(\operatorname{rank}W_G=n-2\), then
\(W_G=W_H\) if and only if $H$ is obtained from $G$ by a reciprocal Wang--Qiu--Hu (WQH) switching. In this case, \(A_G-A_H=uv^T+vu^T,\) where \(u,v\in\{0,\pm1\}^n\) have disjoint supports and form a basis of \(\ker W_G^T\). We also determine the minimum order at which a non-isomorphic pair with equal corank-two walk matrices can occur. No such pair exists for \(n\leq 9\), while a connected pair exists on \(10\) vertices. Starting from this example, we use singleton union and join operations, together with the graph coronal, to construct connected non-isomorphic pairs with equal walk matrices of corank two for every \(n\geq 10\). This, in particular, disproves a conjecture of Liu and Siemons.
\end{abstract}

\noindent\textbf{Keywords:} walk matrix; main eigenvalue; WQH switching; cospectral graphs; controllable graphs.

\section{Introduction}\label{sec:introduction}

Let $G$ be a simple graph on $V(G)=\{1,\ldots,n\}$, with adjacency matrix $A_G$, and let \(\mathbf e\) denote the all-one vector. Its standard \emph{walk matrix} is
\[
W_G:=W(G)=
[\mathbf e,A_G\mathbf e,A_G^2\mathbf e,\ldots,
A_G^{n-1}\mathbf e].
\]
More generally, for an \(n\times n\) matrix $M$, we write
\[
W(M):=
[\mathbf e,M\mathbf e,M^2\mathbf e,\ldots,M^{n-1}\mathbf e].
\]
Thus \(W_G=W(A_G).\) Whenever two walk matrices are compared, the corresponding graphs are regarded as graphs on the same labelled vertex set. The $i$-th entry of \(A_G^k\mathbf e\) is the number of walks of length $k$ starting at vertex $i$. Thus $W_G$ records, vertex by vertex, the successive walk counts. Standard references for spectral and matrix methods in graph theory include~\cite{Bapat2014,BrouwerHaemers2012}.

The column space \(\mathcal K_G:=\operatorname{col}W_G\) is the Krylov space generated by \(\mathbf e\). Equivalently, if \(E_\lambda\) denotes the spectral idempotent associated with an eigenvalue \(\lambda\) of $A_G$, then
\[
\mathcal K_G
=\operatorname{span}
\{E_\lambda\mathbf e:E_\lambda\mathbf e\neq0\},
\]
where the corresponding \(\lambda\) are precisely the distinct main eigenvalues of \(G\); see~\cite{ScirihaCollins2023}. In particular, \(\operatorname{rank}W_G\) equals the number of distinct main eigenvalues; see, for example, \cite{Hagos2002,LiuSiemons2022}. A graph with nonsingular walk matrix is usually called \emph{controllable}~\cite{Godsil2012}. Walk matrices also play a central role in generalized spectral characterizations; see, for instance, \cite{Wang2013,Wang2017,WangXu2006}. For broader surveys on spectral determination, we refer to \cite{vanDamHaemers2003,vanDamHaemers2009}.

The inverse problem considered here is basic: how much of the graph is already encoded in $W_G$? Liu and Siemons~\cite{LiuSiemons2022} proved that
\(\operatorname{rank}W_G\geq n-1\) forces the adjacency matrix. Hence \(\operatorname{rank}W_G=n-2\) is the first place where uniqueness can fail. They showed that, in
this case, the standard walk matrix determines the full adjacency spectrum and is compatible with at most two adjacency matrices. Their $8$-vertex example produces two distinct adjacency matrices but isomorphic graphs, which led to the conjecture that this is always so~\cite[Remark~5.2]{LiuSiemons2022}. To the best of our knowledge, this conjecture has remained open since it was proposed. The neighbouring rank-$(n-1)$ case has also been studied from the viewpoint of generalized spectral characterization
\cite{WangLiuWang2021}.

We show that the conjecture is false. The resulting ambiguity is not chaotic; rather, it has a rigid switching structure, which we characterize as a reciprocal form of WQH switching.

Switching is a classical source of cospectral graphs; Godsil--McKay switching is the standard example \cite{GodsilMcKay1982}. Regular orthogonal matrices place several such constructions in a common linear-algebraic framework \cite{AbiadHaemers2012,MaoWangLiuQiu2023,AbiadVanDeBergSimoens2026}. Of particular relevance is the switching of Wang, Qiu and Hu \cite{WangQiuHu2019}. If $B$ is obtained from $A$ by WQH switching, then \(B=Q^TAQ, Q\mathbf e=\mathbf e,\) for a regular orthogonal matrix $Q$, and consequently \(W(B)=Q^TW(A).\) Thus WQH switching preserves the adjacency spectrum. Exact preservation of the walk matrix is stricter: $Q$ must fix the entire Krylov space pointwise. 

We call a WQH partition $(C_1,C_2;D_1,D_2;R)$ \emph{reciprocal} if, after exchanging the two switching sides, $(D_1,D_2;C_1,C_2;R)$ is again a WQH partition. The precise definition is given in Section~\ref{sec:preliminaries}.

\begin{theorem}[Corank-two characterization]\label{thm:characterization}
Let $G$ be a graph of order $n$ with \(\operatorname{rank}W_G=n-2,\) and let $H\ne G$ be a graph on the same labelled vertex set. Then \(W_H=W_G\) if and only if $H$ is obtained from $G$ by a reciprocal WQH switching. Moreover, if $A=A_G$ and $B=A_H$, there exist nonzero, disjointly supported vectors $u,v\in\{0,\pm1\}^n$ such that \(\ker W_G^T=\operatorname{span}\{u,v\}, A-B=uv^T+vu^T.\) Writing \(q=\frac12u^Tu, s=\frac12v^Tv,\) one has
\[
B=\left(I-\frac1q uu^T\right)A\left(I-\frac1q uu^T\right)
=\left(I-\frac1s vv^T\right)A\left(I-\frac1s vv^T\right).
\]
\end{theorem}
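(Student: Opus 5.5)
The plan is to reduce $W_H=W_G$ to linear-algebraic conditions on $D:=A-B$ and then pin $D$ down inside $\ker W_G^T$.

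\textbf{Step 1: the Krylov space is fixed pointwise.} Suppose $W_H=W_G$, and write $\mathcal K=\mathcal K_G=\mathcal K_H$ for the common column space. Since $A^k\mathbf e=B^k\mathbf e$ for $0\le k\le n-1$, and $\mathcal K$ is spanned by these vectors, induction gives $A$ and $B$ agreeing on $\mathcal K$. Hence
\[
D\mathcal K=0 .
\]
Because $D$ is symmetric, its column space lies in $\mathcal K^\perp=\ker W_G^T$. This space has dimension $2$ and is $A$-invariant, since $\mathcal K$ is $A$-invariant and $A$ is symmetric. Thus
\[
D=\sum_{i,j} c_{ij} x_ix_j^T
\]
for any basis $x_1,x_2$ of $\mathcal K^\perp$. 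The same argument shows $\mathcal K^\perp$ is $B$-invariant.

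\textbf{Step 2: the shape of $D$.} $D$ is a symmetric integer matrix with entries in $\{0,\pm1\}$ and zero diagonal. A key additional constraint comes from the spectrum: Liu--Siemons show that $W_G$ determines the spectrum, so $\operatorname{tr}A^2=\operatorname{tr}B^2$. This gives $\operatorname{tr}(AD)+\operatorname{tr}(DA)=\operatorname{tr}D^2$.

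Next I would argue that $D$ has rank $2$ with signature $(1,1)$. It cannot be rank $1$, because $\operatorname{tr}D=0$ would then force $D=0$. Being traceless of rank $2$, it is indefinite, so its eigenvalues are $\pm\mu$.

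Now use the zero diagonal. Write $D=uv^T+vu^T$, which is possible because an indefinite rank-$2$ symmetric form on a $2$-space factors this way. The diagonal condition is $2u_iv_i=0$ for all $i$, so $u$ and $v$ have disjoint supports. Then the off-diagonal entries of $D$ are $u_iv_j$, and since they are $0$ or $\pm1$, both $u$ and $v$ can be rescaled to have entries in $\{0,\pm1\}$. Since $\ker W_G^T$ is $2$-dimensional and contains the disjointly supported nonzero vectors $u$ and $v$, we get $\ker W_G^T=\operatorname{span}\{u,v\}$. Note also that $\mathbf e\in\mathcal K$ gives $u^T\mathbf e=v^T\mathbf e=0$, so $u^Tu$ and $v^Tv$ are even, and $q$ and $s$ are integers.

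\textbf{Step 3: the switching formulas.} The plan is to exploit $A$-invariance of $\operatorname{span}\{u,v\}$. Write
\[
Au=\alpha u+\beta v,\qquad Av=\beta u+\gamma v .
\]
Then $B=A-uv^T-vu^T$ satisfies $Bu=\alpha u+(\beta-2q)v$. Expanding $(I-\tfrac1q uu^T)A(I-\tfrac1q uu^T)$ gives
\[
A-\tfrac1q\bigl(uu^TA+Auu^T\bigr)+\tfrac{u^TAu}{q^2}\,uu^T .
\]
Substituting $Au$ and $u^TAu=2q\alpha$, this equals $A-\beta\,(vu^T+uv^T)/q$. So the first formula holds exactly when $\beta=q$.

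I expect this to be the main obstacle. I would derive $\beta=q$, and symmetrically $\beta=s$, from the requirement that $B$ is a $0/1$ matrix. On $\operatorname{supp}u\times\operatorname{supp}v$, $D=uv^T$ must have signs compatible with $A_{ij}\in\{0,1\}$: that is, $A_{ij}=1$ where $u_iv_j=1$, and $A_{ij}=0$ where $u_iv_j=-1$. Then
\[
u^TAv=\sum_{i,j} u_iv_jA_{ij}=\#\{(i,j):u_iv_j=1\}=|u^+||v^+|+|u^-||v^-| .
\]
Using $|u^+|=|u^-|=q/1$ per sign, from $u^T\mathbf e=0$, this equals $q\cdot s$ scaled. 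Since $\beta\, v^Tv=u^TAv$, we get $\beta=q$. The same computation gives $\beta=s$, which forces $q=s$ as a byproduct.

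\textbf{Step 4: WQH switching and the converse.} The sign pattern of $u$ and $v$ yields the partition $C_1=u^+$, $C_2=u^-$, $D_1=v^+$, $D_2=v^-$, with $R$ the rest. The matrices $Q=I-\tfrac1q uu^T$ and $I-\tfrac1s vv^T$ are regular orthogonal matrices, so $B$ arises by a WQH switching on either side, and the partition is therefore reciprocal. I would check that the WQH adjacency conditions follow from $Au,Av\in\operatorname{span}\{u,v\}$ together with the sign pattern.

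For the converse, start from a reciprocal WQH partition with $B=Q^TAQ$. Then $W(B)=Q^TW(A)$, and $Q$ acts as the identity on $u^\perp$. The reciprocity hypothesis shows that both $u$ and $v$ lie in $\ker W_G^T$, and $u\perp\mathcal K$ gives $Q^TW_G=W_G$.
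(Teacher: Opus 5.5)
\textbf{Gap in Step 3.} Your overall architecture is the paper's. $D=A-B$ annihilates $\mathcal K$, so $\operatorname{Im}D=\ker W_G^T$. Then $D=uv^T+vu^T$ with disjointly supported $\{0,\pm1\}$ vectors; your route via an indefinite rank-two form plus $D_{ii}=2u_iv_i=0$ is a clean alternative to Lemma~\ref{lem:rank-two-sign}. The checkerboard comes from $B_{ij}=A_{ij}-u_iv_j\in\{0,1\}$, and the WQH conditions are read off from $A$-invariance of $\operatorname{span}\{u,v\}$. But Step 3 contains a genuine error.

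You write $Au=\alpha u+\beta v$ and $Av=\beta u+\gamma v$ with the \emph{same} $\beta$. That would follow from symmetry of $A$ only if $(u,v)$ were orthonormal. Here $u^Tu=2q$ and $v^Tv=2s$, so if $Av=\gamma u+\delta v$, symmetry gives only $v^TAu=u^TAv$, i.e.\ $2s\beta=2q\gamma$. Hence ``$\gamma=\beta$'' is equivalent to $q=s$, and your ``byproduct'' $q=s$ is circular. It is also false. Take vertices $a,b,c,c',d,d',z,w$ with edges $ac,ac',az,bd,bd',bz,cz,cw,dz,d'w,zw$, and the partition $C_1=\{a\}$, $C_2=\{b\}$, $D_1=\{c,c'\}$, $D_2=\{d,d'\}$, $R=\{z,w\}$. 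This partition is reciprocal, with $Au=v$ and $Av=2u$. The minor of $[\mathbf e,A\mathbf e,\ldots,A^5\mathbf e]$ on rows $a,c,c',d,z,w$ equals $-12$. So $\operatorname{rank}W_G=6=n-2$ while $q=1\neq 2=s$. This is why the theorem keeps $q$ and $s$ separate.

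\textbf{Repair.} Keep $\gamma$ as an independent coefficient. Your count $u^TAv=|C_1||D_1|+|C_2||D_2|=2qs$ then gives $\beta=q$ from $v^TAu=2s\beta$, and $\gamma=s$ from $u^TAv=2q\gamma$. (The paper gets these more directly by evaluating $(Au)_x$ at $x\in D_1$ and $(Av)_y$ at $y\in C_1$.) Your expansion applied to $v$ then yields $(I-\frac1s vv^T)A(I-\frac1s vv^T)=A-\frac{\gamma}{s}(uv^T+vu^T)=B$.

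\textbf{Smaller points.}
\begin{enumerate}
\item The fact that $I-\frac1q uu^T$ and $I-\frac1s vv^T$ are regular orthogonal does not by itself make the partition WQH, since the WQH property is combinatorial. You must read the balance conditions, together with $\alpha,\delta\in\mathbb Z$ (evaluate at a vertex of $C_1$, resp.\ $D_1$), coordinatewise from $Au=\alpha u+qv$ and $Av=su+\delta v$, as you indicate you would.
\item In the converse, ``reciprocity shows $u,v\in\ker W_G^T$'' needs one line. These two identities make $L=\operatorname{span}\{u,v\}$ $A$-invariant, and $\mathbf e\perp L$, hence $A^k\mathbf e\perp L$ for all $k$.
\item The appeal to Liu--Siemons and $\operatorname{tr}A^2=\operatorname{tr}B^2$ is never used and can be dropped.
\end{enumerate}
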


So the ambiguity is not merely two-dimensional in a numerical sense: the two-dimensional space $\ker W_G^T$ actually carries the whole switching mechanism. Conversely, reciprocal WQH switching preserves every walk vector, even when the walk matrix has larger corank.

The isomorphism question then becomes sharp. If one switching pair has size one, the corresponding reflection is simply a transposition, hence gives an isomorphic graph. Orders $8$ and $9$ still cannot support a non-isomorphic corank-two pair. Order $10$ can, and from that seed every larger order can be reached.

\begin{theorem}[Order dichotomy]\label{thm:order-dichotomy}
There exist non-isomorphic graphs $G,H$ of order $n$ satisfying \(W_G=W_H, \operatorname{rank}W_G=n-2\) if and only if \(n\ge10.\) Moreover, for every $n\ge10$, such a pair may be chosen connected.
\end{theorem}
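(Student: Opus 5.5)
The plan has two directions: the lower bound (no pair for $n\le 9$) and an explicit construction for every $n\ge 10$. Both rest on Theorem~\ref{thm:characterization}.

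\emph{Lower bound.} Suppose $W_G=W_H$, $\operatorname{rank}W_G=n-2$ and $G\not\cong H$. Theorem~\ref{thm:characterization} gives disjointly supported $u,v\in\{0,\pm1\}^n$ with $A-B=uv^T+vu^T$ and $B=(I-\frac1q uu^T)A(I-\frac1q uu^T)$. Since $u\perp\mathbf e$, the positive and negative parts of $u$ have equal size, so $|\operatorname{supp}u|=2q$ and likewise $|\operatorname{supp}v|=2s$.
\begin{itemize}
\item If $q=1$, then $u=e_i-e_j$ and $I-uu^T$ is the transposition matrix of $(i\,j)$, so $H\cong G$. The same holds if $s=1$.
\item Hence $q,s\ge2$, and non-isomorphism forces $|\operatorname{supp}u|+|\operatorname{supp}v|\ge 8$, i.e.\ $n\ge8$.
\end{itemize}
To exclude $n=8,9$ I would use the WQH partition structure of Section~\ref{sec:preliminaries}. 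The sets $C_1,C_2$ (respectively $D_1,D_2$) are the $\pm$ parts of $u$ (respectively $v$), and $R$ has at most one vertex. The reciprocal WQH conditions constrain the edges inside $C_1\cup C_2$ and $D_1\cup D_2$, the edges between them, and the edges to $R$; they also constrain each vertex's neighbourhood counts into these parts.

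The claim I would aim to prove is that when $q=s=2$ and $|R|\le1$, these constraints force the graph to have an automorphism carrying $A$ to $B$. Alternatively, they might force $\operatorname{rank}W_G<n-2$, for instance through an extra kernel vector of $W_G^T$ coming from a local symmetry. Concretely, I would enumerate the admissible bipartite patterns between the $4+4$ vertices, together with the single vertex of $R$, up to the symmetries of the partition. For each pattern I would either exhibit the isomorphism, e.g.\ the map swapping $C_1\leftrightarrow C_2$ combined with a suitable permutation on $D$, or exhibit a third independent vector in $\ker W_G^T$. This finite case analysis is the main obstacle. If the hand argument becomes unwieldy, I would fall back on a computer enumeration of all graphs on $8$ and $9$ vertices with $\operatorname{rank}W_G=n-2$, checking reciprocal switchings directly.

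\emph{Construction.} For $n=10$ I would exhibit an explicit connected graph $G$ with $q=s=2$ and $|R|=2$ (this is where the extra vertices are used), and verify three facts:
\begin{itemize}
\item the reciprocal WQH conditions hold, so $W_G=W_H$ by the converse part of Theorem~\ref{thm:characterization};
\item $\operatorname{rank}W_G=8$, for instance by computing the number of main eigenvalues;
\item $G\not\cong H$, via a simple invariant such as degree sequences of neighbourhoods or counts of triangles through each vertex.
\end{itemize}
To pass from $n$ to $n+1$, I would add one vertex in a way that preserves equality of walk matrices and raises the rank by exactly one. Two operations are natural: the singleton union $G\cup K_1$ and the join $G\vee K_1$. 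The added vertex is fixed by the switching, so $W$ equality persists. The new main eigenvalue count can be read off from the main-eigenvalue description of $\mathcal K_G$.

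The delicate point is keeping the corank exactly $2$ and the graph connected. The union $G\cup K_1$ is disconnected, so I would use the join, or a coronal-type argument, to certify that the new vertex contributes one new main eigenvalue. A union step could be followed by a join step to restore connectivity, alternating as needed. Non-isomorphism survives because the added vertex is distinguishable, e.g.\ as the unique dominating vertex, so any isomorphism would have to fix it and restrict to an isomorphism $G\cong H$.
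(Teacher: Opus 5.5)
Your reduction to $q=s=2$, $|R|\le 1$ matches the paper. However, the step that actually rules out $n=8,9$ is left as an unspecified enumeration with a computer fallback, so the lower bound is not yet proved. The missing observation is this. Since $u$ is supported on $C=C_1\cup C_2$ and $v$ vanishes there, restricting $Au=\alpha u+qv$ to $C$ makes $\mathbf 1_{C_1}-\mathbf 1_{C_2}$ an eigenvector of $A(G[C])$. On four vertices this forces $G[C]$ to be regular (Lemma~\ref{lem:four-vertex}(1)), and likewise $G[D]$.

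The paper turns regularity into a rank bound. Together with the checkerboard block, $\operatorname{span}\{\mathbf 1_C,\mathbf 1_D\}$ for $n=8$ is $A$-invariant and contains $\mathbf e$. For $n=9$ the same holds for $\operatorname{span}\{\mathbf 1_C,\mathbf 1_D,\mathbf e_z,\mathbf 1_{N(z)\cap C},\mathbf 1_{N(z)\cap D}\}$, using the balance of $z$ and Lemma~\ref{lem:four-vertex}(2). Hence $\operatorname{rank}W_G\le 2$, resp.\ $\le 5$, contradicting $n-2$.

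Your own target, $G\cong H$, also follows quickly once regularity is known. A regular graph on four vertices is a union of perfect matchings of $K_4$, and each of these is fixed by the Klein four-group. So take a fixed-point-free involution $\sigma$ of $D$ that exchanges $D_1$ and $D_2$ and, when $R=\{z\}$, stabilizes $N(z)\cap D$; it exists because $|N(z)\cap D_1|=|N(z)\cap D_2|$. Then $\sigma$ is an automorphism of $G[D]$, and $\sigma$ extended by the identity carries $G$ onto $H$. That route shows such mates are always isomorphic, while the paper shows the rank hypothesis is never met. Your proposal, though, contains neither argument.

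On the constructive side, the order-$10$ seed is the counterexample itself, and you only promise to find one. The paper gives $G_{10}$ explicitly, certifies rank $8$ by an $8\times 8$ minor of determinant $48$, and separates $G_{10}$ from $H_{10}$ by the degrees around the unique degree-$2$ vertex.

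More seriously, your one-vertex step does not iterate as described. After one join, a second join produces two adjacent twin universal vertices $a,b$. Then $\mathbf e_a-\mathbf e_b\in\ker W^T$ alongside $u,v$, and the corank becomes $3$. In the coronal bookkeeping, $m_U(-1)=-h_U(-1)$, so the non-cancellation condition for a second $\mathcal U$ always fails. A union step leaves the graph disconnected, so alternating union and join in a single chain gives connected pairs only at every other order.

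The paper's fix is the two-vertex operation $\mathcal T(X)=(X\cup K_1)\vee K_1$. It keeps the graph connected and raises order and walk rank by exactly $2$ whenever $m(0)h(-1)\neq 0$. This condition propagates, because $m_T(0)=-m(0)$ and $h_T(-1)=-h(-1)$. The paper runs $\mathcal T$ from $G_{10}$ for even orders and from $\mathcal U(G_{10})$ for odd orders. The single join is legitimate because $m_0(-1)+h_0(-1)=4\neq 0$, after which $m_1(0)=12$ and $h_1(-1)=4$. These explicit non-cancellation checks are exactly what your appeal to ``the main-eigenvalue description'' leaves out.
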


In particular, the conjecture of Liu and Siemons \cite[Remark~5.2]{LiuSiemons2022} is false, and $10$ is the smallest order of a counterexample.

The proof of the lower bound is structural. For \(n=8\), the balance relations place the Krylov space in a two-dimensional $A$-invariant subspace; for $n=9$, a slightly
finer argument places it in an invariant subspace of dimension at most $5$. For the positive direction, we exhibit a connected pair $G_{10},H_{10}$ with common walk rank $8$. We then use
\[
\mathcal U(X):=X\vee K_1,\quad \mathcal T(X):=(X\cup K_1)\vee K_1.
\]
Singleton union/join operations have appeared before in walk-matrix constructions \cite{LiuSiemonsWang2019}. Here the reciprocal WQH core survives both operations, while the walk rank is tracked by the coronal
\[
\Gamma_X(x)=\mathbf e^T(xI-A_X)^{-1}\mathbf e.
\]
A short non-cancellation argument shows that $\mathcal T$ raises both order and walk rank by two; one initial application of $\mathcal U$ supplies the odd orders.

\section{Preliminaries}\label{sec:preliminaries}

For a graph $G$ with adjacency matrix $A$, write
\[
\mathcal K_G:=\operatorname{col}W_G.
\]

\begin{lemma}\label{lem:krylov}
Let $r=\operatorname{rank}W_G$. Then
\[
\mathcal K_G=\operatorname{span}\{\mathbf e,A\mathbf e,\ldots,A^{r-1}\mathbf e\}
\]
is $A$-invariant. Since $A$ is symmetric, $\mathcal K_G^\perp=\ker W_G^T$ is also $A$-invariant. In particular, if $r=n-2$, then \(\dim\ker W_G^T=2.\)
\end{lemma}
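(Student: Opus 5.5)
The plan is to argue with the Cayley--Hamilton theorem and with the symmetry of $A$ to get the three assertions in turn. For the first, let $k$ be the smallest index such that $A^k\mathbf e$ is a linear combination of $\mathbf e,A\mathbf e,\ldots,A^{k-1}\mathbf e$. Such a $k\le n$ exists because $n+1$ vectors in $\mathbb R^n$ are dependent, and then $\mathbf e,\ldots,A^{k-1}\mathbf e$ are linearly independent by minimality. Put $S=\operatorname{span}\{\mathbf e,\ldots,A^{k-1}\mathbf e\}$. Applying $A$ to each spanning vector sends $A^j\mathbf e$ to $A^{j+1}\mathbf e$, which lies in $S$ for $j<k-1$. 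For $j=k-1$ it gives $A^k\mathbf e\in S$ by the choice of $k$. Hence $AS\subseteq S$.

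Induction on $m$ then gives $A^m\mathbf e\in S$ for every $m\ge0$. So every column of $W_G$ lies in $S$, and $S$ is spanned by columns of $W_G$ because $k\le n$. It follows that $\operatorname{col}W_G=S$, that $\dim S=k$, and that $k=r$. This proves $\mathcal K_G=\operatorname{span}\{\mathbf e,\ldots,A^{r-1}\mathbf e\}$ and shows that it is $A$-invariant.

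For the second assertion, the identity $\operatorname{col}(W_G)^\perp=\ker W_G^T$ is standard linear algebra. Take $x\in\mathcal K_G^\perp$ and $y\in\mathcal K_G$. Then $(Ax)^Ty=x^TAy=0$, since $A=A^T$ and $Ay\in\mathcal K_G$. Therefore $Ax\in\mathcal K_G^\perp$.

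Finally, rank--nullity applied to $W_G^T$ gives $\dim\ker W_G^T=n-\operatorname{rank}W_G^T=n-r$, which equals $2$ when $r=n-2$.

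No step here is a real obstacle. The only point needing care is confirming that the minimal dependence index equals the rank, so that the first $r$ Krylov vectors, and not some other $r$ columns, span $\mathcal K_G$.
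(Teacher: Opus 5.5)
Your proof is correct and follows essentially the same route as the paper: you take a minimal dependence index among the Krylov vectors, show the span is $A$-invariant, identify that index with $r$, use symmetry for $\mathcal K_G^\perp$, and finish with rank--nullity. The only cosmetic difference is that you get the existence of $k\le n$ from a dimension count rather than from Cayley--Hamilton, and you spell out more explicitly why $\operatorname{col}W_G=S$ forces $k=r$.
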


\begin{proof}
Consider the infinite Krylov sequence
\[
\mathbf e,A\mathbf e,A^2\mathbf e,\ldots .
\]
By the Cayley--Hamilton theorem, there exists a least integer \(m\leq n\) such that \(A^m\mathbf e \in \operatorname{span}\{\mathbf e,A\mathbf e,\ldots,A^{m-1}\mathbf e\}.\)
By the minimality of $m$, the vectors \(\mathbf e,A\mathbf e,\ldots,A^{m-1}\mathbf e\) are linearly independent. Multiplying the above relation successively by $A$ shows that every later Krylov vector lies in the same span. Hence \(m=r\), and therefore \(\mathcal K_G=\operatorname{span}\{\mathbf e,A\mathbf e,\ldots,A^{r-1}\mathbf e\}\)
is $A$-invariant.

Since $A$ is symmetric, \(\mathcal K_G^\perp\) is also $A$-invariant. Indeed, if \(x\in\mathcal K_G^\perp\) and \(y\in\mathcal K_G\), then \(Ay\in\mathcal K_G\), and hence
\(\langle Ax,y\rangle=\langle x,Ay\rangle=0.\) Thus \(Ax\in\mathcal K_G^\perp\). Finally, \(\mathcal K_G^\perp=\ker W_G^T,\) so if \(r=n-2\), then \(\dim\ker W_G^T=2.\)
\end{proof}

We next recall WQH switching in the form convenient here. For $X\subseteq V(G)$, let $\mathbf1_X$ denote its characteristic vector, and for $z\in V(G)$ let $\mathbf e_z$ denote the corresponding standard basis vector. We write $d_X(z):=|N_G(z)\cap X|$.

\begin{definition}\label{def:WQH}
Let
\[
V(G)=C_1\sqcup C_2\sqcup D_1\sqcup D_2\sqcup R,
\quad |C_1|=|C_2|=:q>0.
\]
This is a \emph{WQH partition} if there is an integer $\alpha$ such that
\[
d_{C_1}(x)-d_{C_2}(x)=\alpha\quad(x\in C_1),
\quad
d_{C_2}(x)-d_{C_1}(x)=\alpha\quad(x\in C_2),
\]
every vertex of $C_1$ is adjacent to all vertices of $D_1$ and to none of $D_2$, every vertex of $C_2$ is adjacent to all vertices of $D_2$ and to none of $D_1$, and
\(d_{C_1}(z)=d_{C_2}(z)(z\in R).\) The associated WQH switching interchanges the complete blocks $C_1-D_1,C_2-D_2$ with $C_1-D_2,C_2-D_1$, leaving all other adjacencies unchanged.
\end{definition}

\begin{definition}\label{def:reciprocal}
A WQH partition $(C_1,C_2;D_1,D_2;R)$ is \emph{reciprocal} if $(D_1,D_2;C_1,C_2;R)$ is again a WQH partition. In particular, \(|D_1|=|D_2|=s>0.\)
\end{definition}

The two orientations switch exactly the same adjacencies between \(C_1\cup C_2\) and \(D_1\cup D_2\), and hence produce the same switched graph. We call this graph the \emph{reciprocal WQH mate} of $G$.

The following matrix identities will be used repeatedly.

\begin{proposition}\label{prop:WQH-basic}
Let $(C_1,C_2;D_1,D_2;R)$ be a WQH partition of $G$, and put
\(
u=\mathbf1_{C_1}-\mathbf1_{C_2},
v=\mathbf1_{D_1}-\mathbf1_{D_2}.
\)
If $A$ is the adjacency matrix of $G$, then \(Au=\alpha u+qv.\) Moreover, \(Q_u:=I-\frac1q uu^T\) is orthogonal, satisfies $Q_u\mathbf e=\mathbf e$, and the switched adjacency matrix $B$ is
\[
B=Q_uAQ_u=A-uv^T-vu^T.
\]
Consequently,
\(
W(B)=Q_uW(A),
W(B)=W(A)\Longleftrightarrow u\in\ker W(A)^T.
\)
\end{proposition}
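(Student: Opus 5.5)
We verify the stated identities one at a time by direct computation. The one non-obvious step is the switching formula $B=A-uv^T-vu^T$.

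\textbf{The relation $Au=\alpha u+qv$.} We compute $(Au)_z=d_{C_1}(z)-d_{C_2}(z)$ for each vertex $z$, splitting into cases by the block containing $z$.
\begin{itemize}
\item For $z\in C_1$ this is $\alpha$, and for $z\in C_2$ it is $-\alpha$.
\item For $z\in D_1$ we get $d_{C_1}(z)=q$ and $d_{C_2}(z)=0$, giving $q$; for $z\in D_2$ we get $-q$.
\item For $z\in R$ the value is $0$ by the balance condition.
\end{itemize}
This is exactly $\alpha u+qv$.

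\textbf{Properties of $Q_u$.} Since $u^Tu=2q$, $Q_u$ is the Householder reflection in $u^\perp$: it is symmetric and satisfies $Q_u^2=I$, so it is orthogonal. Since $u^T\mathbf e=|C_1|-|C_2|=0$, we get $Q_u\mathbf e=\mathbf e$.

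\textbf{The switching formula.} Expand
\[
Q_uAQ_u=A-\tfrac1q uu^TA-\tfrac1q Auu^T+\tfrac1{q^2}u(u^TAu)u^T.
\]
Substitute $Au=\alpha u+qv$ and $u^TA=(Au)^T=\alpha u^T+qv^T$, using the symmetry of $A$. Also $u^TAu=\alpha\cdot 2q+q\,u^Tv=2q\alpha$, because $u$ and $v$ have disjoint supports. The right-hand side then becomes
\[
A-\tfrac{\alpha}{q}uu^T-uv^T-\tfrac{\alpha}{q}uu^T-vu^T+\tfrac{2\alpha}{q}uu^T=A-uv^T-vu^T.
\]
It remains to check that $A-uv^T-vu^T$ is the switched adjacency matrix.
\begin{itemize}
\item The matrix $uv^T+vu^T$ is supported only on entries indexed by $(C_1\cup C_2)\times(D_1\cup D_2)$ and the transposed block.
\item On the $C_i\times D_i$ blocks it equals $+1$. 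The original entries there are $1$, so they become $0$.
\item On the $C_i\times D_j$ blocks with $i\neq j$ it equals $-1$. The original entries there are $0$, so they become $1$.
\end{itemize}
This is precisely the WQH switching, so $B=Q_uAQ_u=A-uv^T-vu^T$.

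\textbf{Walk matrices.} Since $Q_u^2=I$, we have $B^k=Q_uA^kQ_u$ for all $k$. Using $Q_u\mathbf e=\mathbf e$, this gives $B^k\mathbf e=Q_uA^k\mathbf e$, so $W(B)=Q_uW(A)$. Therefore $W(B)=W(A)$ holds if and only if $\frac1q uu^TW(A)=0$. Since $u\neq0$, this is equivalent to $u^TW(A)=0$, that is, $u\in\ker W(A)^T$.

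The only step needing care is the block bookkeeping in the switching formula, which confirms that subtracting $uv^T+vu^T$ flips exactly the complete blocks specified by the WQH switching.
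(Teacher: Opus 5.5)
Your proof is correct and follows the same route as the paper's. You read $Au$ blockwise, treat $Q_u$ as a Householder reflection fixing $\mathbf e$, expand $Q_uAQ_u$ using $u^TA=\alpha u^T+qv^T$, deduce $W(B)=Q_uW(A)$ from $Q_u^2=I$, and carry out explicitly the expansion (with $u^TAu=2q\alpha$) and the checkerboard bookkeeping that the paper leaves as a ``direct expansion''.
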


\begin{proof}
Read $Au$ on the five parts $C_1,C_2,D_1,D_2,R$; this gives $Au=\alpha u+qv$. Since $u^Tu=2q$ and $u^T\mathbf e=0$, we have $Q_u^2=I$ and $Q_u\mathbf e=\mathbf e$. Also $u^TA=\alpha u^T+qv^T$, so a direct expansion gives
\[
Q_uAQ_u=A-uv^T-vu^T,
\]
which is precisely the WQH-switched adjacency matrix. Since $Q_u^2=I$ and $Q_u\mathbf e=\mathbf e$, we have
\[
B^k=(Q_uAQ_u)^k=Q_uA^kQ_u
\]
for every $k\ge0$. Hence $B^k\mathbf e=Q_uA^k\mathbf e$. Therefore \(W(B)=Q_uW(A).\) Moreover, $Q_u$ fixes a vector $x$ if and only if $x\perp u$. Thus $W(B)=W(A)$ if and only if $u$ is orthogonal to every column of $W(A)$, that is, \(u\in\ker W(A)^T.\)
\end{proof}

Reciprocity gives one further fact, crucial for the converse direction of Theorem~\ref{thm:characterization}.

\begin{proposition}\label{prop:reciprocal-walks}
Suppose $(C_1,C_2;D_1,D_2;R)$ is reciprocal, and let $H$ be the reciprocal WQH mate of $G$. Then
\[
A_G^k\mathbf e=A_H^k\mathbf e \quad (k\ge0).
\]
In particular,
\(
W_G=W_H,
\operatorname{rank}W_G\le n-2.
\)
\end{proposition}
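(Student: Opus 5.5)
Our plan is to show by induction on $k$ that $u$ and $v$ are both orthogonal to every walk vector $A_G^k\mathbf e$; Proposition~\ref{prop:WQH-basic} then converts this into $A_H^k\mathbf e=A_G^k\mathbf e$. Put $u=\mathbf1_{C_1}-\mathbf1_{C_2}$ and $v=\mathbf1_{D_1}-\mathbf1_{D_2}$, and write $A=A_G$, $B=A_H$.

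We first apply Proposition~\ref{prop:WQH-basic} to both orientations. For $(C_1,C_2;D_1,D_2;R)$ we get $Au=\alpha u+qv$. For the reversed partition $(D_1,D_2;C_1,C_2;R)$, with constant $\beta$ and $s=|D_1|$, we get $Av=\beta v+su$. By the remark after Definition~\ref{def:reciprocal}, both orientations yield the same switched matrix $B=A-uv^T-vu^T$. Consequently
\[
B=Q_uAQ_u=Q_vAQ_v .
\]
The span $U=\operatorname{span}\{u,v\}$ is then $A$-invariant, and $u^T\mathbf e=v^T\mathbf e=0$, so $\mathbf e\in U^\perp$. Since $A$ is symmetric, $U^\perp$ is also $A$-invariant, so $A^k\mathbf e\in U^\perp$ for every $k\ge0$. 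In particular $u^TA^k\mathbf e=0$, and therefore $Q_uA^k\mathbf e=A^k\mathbf e$.

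Next, using $Q_u^2=I$, we have $B^k=Q_uA^kQ_u$. Since $Q_u\mathbf e=\mathbf e$,
\[
B^k\mathbf e=Q_uA^k\mathbf e=A^k\mathbf e .
\]
Hence $W_H=W_G$.

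Finally, $u$ and $v$ lie in $\ker W_G^T$. They are nonzero, disjointly supported, and hence linearly independent, so $\dim\ker W_G^T\ge2$. This gives $\operatorname{rank}W_G\le n-2$. (Here $s>0$ holds by definition.)

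The only delicate point is the invariance step. We need both relations $Au\in U$ and $Av\in U$, which is exactly where reciprocity enters; a single orientation gives only $Au=\alpha u+qv$, which does not close up the span.
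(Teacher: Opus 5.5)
Your proof is correct and follows essentially the paper's argument: reciprocity gives $Au,Av\in\operatorname{span}\{u,v\}$, so every walk vector lies in the $(n-2)$-dimensional complement $\operatorname{span}\{u,v\}^\perp$, which yields both the walk equality and the rank bound. The only difference is cosmetic: you finish with $B^k\mathbf e=Q_uA^kQ_u\mathbf e=Q_uA^k\mathbf e=A^k\mathbf e$, whereas the paper observes that $A-B=uv^T+vu^T$ annihilates $\operatorname{span}\{u,v\}^\perp$ and inducts on $k$.
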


\begin{proof}
Write \(A=A_G, B=A_H.\) Let \(|D_1|=|D_2|=s\), and set
\(
u=\mathbf1_{C_1}-\mathbf1_{C_2},
v=\mathbf1_{D_1}-\mathbf1_{D_2}.
\)
Applying Proposition~\ref{prop:WQH-basic} from the two sides gives \(Au=\alpha u+qv, Av=su+\delta v\) for some integers $\alpha,\delta$. Hence \(L:=\operatorname{span}\{u,v\}\) is $A$-invariant. Since $A$ is symmetric and $e\perp L$, we have
\[
A^k\mathbf e\in L^\perp \quad (k\ge0).
\]
By Proposition~\ref{prop:WQH-basic}, \(B=A-uv^T-vu^T.\) Hence, for every \(x\in L^\perp\),
\((A-B)x
=(uv^T+vu^T)x=0.\)
It follows inductively that
\(
B^ke=A^ke (k\ge0).
\)
Therefore \(W_G=W_H.\)

Since $u$ and $v$ are nonzero and have disjoint supports, they are linearly independent, and hence $\dim L=2$. Finally, \(\mathcal K_G\subseteq L^\perp, \dim L^\perp=n-2,\)
and hence \(\operatorname{rank}W_G\le n-2.\)
\end{proof}

\section{Corank-Two Walk Equivalence}\label{sec:characterization}

The codimension-two assumption leaves very little room for the difference of two compatible adjacency matrices.

\begin{lemma}\label{lem:difference-rank-two}
Assume $G\ne H$, \(W_G=W_H=W, \operatorname{rank}W=n-2,\) and put $D=A_G-A_H$. Then
\(
\operatorname{rank}D=2,
\ker D=\operatorname{col}W,
\operatorname{Im}D=\ker W^T.
\)
\end{lemma}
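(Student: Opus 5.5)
Write $A=A_G$, $B=A_H$ and $\mathcal K=\operatorname{col}W$, so that $\dim\mathcal K=n-2$. The plan is to prove $\mathcal K\subseteq\ker D$ and $\operatorname{Im}D\subseteq\ker W^T$, and then use $D\neq0$ and the symmetry of $D$ to force both inclusions to be equalities.

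For the first inclusion, I will show that $A^k\mathbf e=B^k\mathbf e$ for all $k\ge0$. For $k\le n-1$ this is just the equality of the columns of $W_G=W_H$. By Lemma~\ref{lem:krylov}, with $r=n-2$, the space $\mathcal K$ is spanned by $\mathbf e,A\mathbf e,\ldots,A^{n-3}\mathbf e$. The same vectors are $\mathbf e,B\mathbf e,\ldots,B^{n-3}\mathbf e$, because they are the first $n-2$ columns of the common walk matrix.

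Now take $0\le k\le n-3$. Then $k+1\le n-2\le n-1$, so the equality of columns gives
\[
A(A^k\mathbf e)=A^{k+1}\mathbf e=B^{k+1}\mathbf e=B(B^k\mathbf e)=B(A^k\mathbf e).
\]
Hence $D(A^k\mathbf e)=0$ for every $k\le n-3$. These vectors span $\mathcal K$, so $D$ vanishes on $\mathcal K$, i.e.\ $\mathcal K\subseteq\ker D$. This is the one delicate point: the column $A^{n-1}\mathbf e$ cannot be used as the image of a spanning vector, so I must check that $k+1\le n-1$ in the range used. Since the spanning set stops at $k=n-3$, this holds with room to spare.

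For the second inclusion, $D$ is symmetric, so $\operatorname{Im}D=(\ker D)^\perp\subseteq\mathcal K^\perp$. By Lemma~\ref{lem:krylov}, $\mathcal K^\perp=\ker W^T$, which has dimension $2$. Thus $\operatorname{rank}D\le2$.

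It remains to show that $\operatorname{rank}D$ is exactly $2$. Since $G\ne H$, we have $D\ne0$, and $D$ is symmetric with zero diagonal, because both adjacency matrices have zero diagonal. Suppose $\operatorname{rank}D=1$. Then $D=cww^T$ for some real $c\neq0$ and nonzero vector $w$. The diagonal entries of $D$ are $cw_i^2$, and these all vanish, so $w=0$, a contradiction. Therefore $\operatorname{rank}D=2$.

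Finally, $\operatorname{Im}D$ is contained in the two-dimensional space $\ker W^T$ and has dimension $2$, so $\operatorname{Im}D=\ker W^T$. Likewise $\ker D$ contains $\mathcal K$ and has dimension $n-2=\dim\mathcal K$, so $\ker D=\mathcal K=\operatorname{col}W$.
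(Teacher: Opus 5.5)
Your proof is correct and follows the paper's argument essentially step for step: $D$ annihilates the spanning vectors $A^k\mathbf e$ ($0\le k\le n-3$) of $\operatorname{col}W$, the zero diagonal rules out $\operatorname{rank}D=1$, and dimension counting together with the symmetry of $D$ gives $\ker D=\operatorname{col}W$ and $\operatorname{Im}D=\ker W^T$. There are two cosmetic differences: you bound $\operatorname{rank}D\le2$ via $\operatorname{Im}D\subseteq\ker W^T$ rather than via $\dim\ker D\ge n-2$, and your opening claim that $A^k\mathbf e=B^k\mathbf e$ for all $k\ge0$ is never needed (only $k\le n-2$ is used).
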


\begin{proof}
By Lemma~\ref{lem:krylov},
\[
\operatorname{col}W=\operatorname{span}\{\mathbf e,A_G\mathbf e,\ldots,A_G^{n-3}\mathbf e\}.
\]
For $0\le k\le n-3$, equality of the walk matrices gives $A_G^k\mathbf e=A_H^k\mathbf e$, and therefore
\[
DA_G^k\mathbf e
=A_G^{k+1}\mathbf e-A_HA_G^k\mathbf e
=A_G^{k+1}\mathbf e-A_H^{k+1}\mathbf e
=0.
\]
Hence $\operatorname{col}W\subseteq\ker D$, so $\operatorname{rank}D\le2$. Now $D\ne0$ because $G\ne H$. Nor can $D$ have rank one. Indeed, if \(D=\lambda xx^T\) for some 
\(\lambda\neq0\) and \(x\neq0\), then \(D_{ii}=\lambda x_i^2\) for every $i$. Since $D$ has zero diagonal, this would force \(x_i=0\) for every  $i$, a contradiction. Hence \(\operatorname{rank}D=2.\) Since \(\operatorname{col}W\subseteq\ker D\) and both spaces have dimension $n-2$, we obtain \(\ker D=\operatorname{col}W.\) Since $D$ is symmetric, \(\operatorname{Im}D=(\ker D)^\perp=\ker W^T.\)
\end{proof}

\begin{lemma}\label{lem:rank-two-sign}
Let $D\ne0$ be symmetric, of rank $2$, with zero diagonal and entries in $\{0,\pm1\}$. Then there are nonzero, disjointly supported vectors \(u,v\in\{0,\pm1\}^n\) such that
\(D=uv^T+vu^T.\)
\end{lemma}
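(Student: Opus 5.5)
The plan is to build $u$ and $v$ directly from two columns of $D$, rather than passing through the spectral decomposition. Since $D\ne0$ has zero diagonal, there is a pair $i\ne j$ with $D_{ij}=D_{ji}=\varepsilon\in\{\pm1\}$. The principal $2\times2$ submatrix of $D$ on rows and columns $\{i,j\}$ is $\begin{pmatrix}0&\varepsilon\\ \varepsilon&0\end{pmatrix}$, which is nonsingular. Hence rows $r_i$ and $r_j$ of $D$ are linearly independent, and because $\operatorname{rank}D=2$ they span the row space.

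Next, for an arbitrary index $k$, I write $r_k=a\,r_i+b\,r_j$ and read off the coefficients from columns $i$ and $j$. Using $D_{ii}=D_{jj}=0$:
\begin{itemize}
\item column $i$ gives $D_{ki}=b\,\varepsilon$, so $b=\varepsilon D_{ki}$;
\item column $j$ gives $D_{kj}=a\,\varepsilon$, so $a=\varepsilon D_{kj}$.
\end{itemize}
Therefore
\[
D_{kl}=\varepsilon\bigl(D_{kj}D_{il}+D_{ki}D_{jl}\bigr)\qquad\text{for all }k,l.
\]
Writing $d_i,d_j$ for the $i$-th and $j$-th columns of $D$, this says $D=\varepsilon\,(d_i d_j^T+d_j d_i^T)$. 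I then set $u=d_i$ and $v=\varepsilon d_j$. Both lie in $\{0,\pm1\}^n$, and both are nonzero, since $u_j=\varepsilon$ and $v_i=1$. With these choices $D=uv^T+vu^T$.

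The remaining point, which is the only non-routine step, is disjointness of the supports. For this I evaluate the identity above at $l=k$ and use the zero diagonal:
\[
0=D_{kk}=2\varepsilon\,D_{ki}D_{kj}.
\]
Hence $D_{ki}D_{kj}=0$ for every $k$, so no index lies in both $\operatorname{supp}d_i$ and $\operatorname{supp}d_j$. Thus $u$ and $v$ are disjointly supported, which completes the lemma.

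I expect no real obstacle here. The one thing to watch is that the coefficients $a,b$ are computed consistently, including for $k\in\{i,j\}$. In that case the formula just returns $r_i$ or $r_j$ itself, because $D_{ii}=D_{jj}=0$ and $\varepsilon^2=1$.
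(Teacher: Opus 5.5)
Your proof is correct, and it takes a genuinely different route from the paper's.

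The paper argues spectrally. From $\operatorname{tr}D=0$ and $\operatorname{rank}D=2$ it writes $D=\lambda(xx^T-yy^T)$ with $x,y$ orthonormal, and uses the zero diagonal to get $|x_i|=|y_i|$. It then splits the common support into sign classes $P,N$ according to whether $y_i=x_i$ or $y_i=-x_i$. From $D_{ij}=\lambda x_ix_j(1-\varepsilon_i\varepsilon_j)$ it reads off that $D$ vanishes inside each class and is nonzero between them. Finally it uses $\operatorname{rank}\begin{pmatrix}0&M\\ M^T&0\end{pmatrix}=2\operatorname{rank}M$ to force the $\pm1$ block $M$ to be a rank-one sign matrix $ab^T$.

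You instead factor $D$ through the nonsingular principal $2\times2$ minor on $\{i,j\}$. Your coefficient computation is exactly the identity $D=[d_i\ d_j]\,S^{-1}[d_i\ d_j]^T$ with $S=S^{-1}=\begin{pmatrix}0&\varepsilon\\ \varepsilon&0\end{pmatrix}$. The zero diagonal then gives disjointness of supports in one line.

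Your route buys economy and elementarity. It needs no spectral theorem or orthogonality. It also needs no separate argument that $M$ has no zero entries, or that a rank-one $\pm1$ matrix factors through sign vectors. Moreover, $u,v\in\{0,\pm1\}^n$ is automatic, because they are, up to sign, columns of $D$ itself.

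The paper's route, in exchange, displays the eigenstructure of $D$ directly: the eigenvalues $\pm\lambda$ and the bipartite sign classes. This fits the eigenvector language used elsewhere in the paper, although the later arguments only use the conclusion $D=uv^T+vu^T$.
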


\begin{proof}
Since $\operatorname{tr}D=0$, the two nonzero eigenvalues of $D$ are $\lambda$ and $-\lambda$. Thus \(D=\lambda(xx^T-yy^T)\) for orthonormal vectors $x,y$.

The condition $D_{ii}=0$ gives \(|x_i|=|y_i|\) for every $i$. Hence, on their common support,
\[
y_i=\varepsilon_i x_i, \quad
\varepsilon_i\in\{\pm1\}.
\]
Since $x\perp y$,
\[
0=x^Ty=\sum_i \varepsilon_i x_i^2.
\]
On the common support of $x$ and $y$, all $x_i^2$ are positive. Hence both signs $+1$ and $-1$ must occur among the \(\varepsilon_i\). Put \(P=\{i:\varepsilon_i=1\}, N=\{i:\varepsilon_i=-1\}.\) Then \(D_{ij}=\lambda x_ix_j(1-\varepsilon_i\varepsilon_j),\) so $D_{ij}=0$ when $i,j$ lie in the same one of $P,N$, and $D_{ij}\neq0$ when $i\in P$, $j\in N$.

Thus, after deleting the zero rows and columns,
\[
D=
\begin{pmatrix}
0&M\\
M^T&0
\end{pmatrix}.
\]
The kernel of this block matrix is \(\ker M^T\oplus\ker M,\) and hence
\[
\operatorname{rank}
\begin{pmatrix}
0&M\\
M^T&0
\end{pmatrix}
=2\operatorname{rank}M.
\]
Since \(\operatorname{rank}D=2\), it follows that \(\operatorname{rank}M=1.\) Every entry of $M$ belongs to $\{\pm1\}$, so \(M=ab^T\) for some $a\in\{\pm1\}^{P}$ and $b\in\{\pm1\}^{N}$. Extending $a$ and $b$ by zero outside $P$ and $N$, respectively, gives disjointly supported vectors \(u,v\in\{0,\pm1\}^n\) such that \(D=uv^T+vu^T.\)
\end{proof}

\begin{proof}[Proof of Theorem~\ref{thm:characterization}]
Assume first that $W_G=W_H$, and write $A=A_G$, $B=A_H$. Lemmas~\ref{lem:difference-rank-two} and \ref{lem:rank-two-sign} give \(A-B=uv^T+vu^T\) for nonzero, disjointly supported $u,v\in\{0,\pm1\}^n$. Since \(W_G=W_H\), their second columns agree, and therefore \(A\mathbf e=B\mathbf e.\) Thus
\[
\mathbf e\in\ker(A-B).
\]
Consequently,
\[
0=(A-B)\mathbf e
=(v^T\mathbf e)u+(u^T\mathbf e)v.
\]

The vectors $u,v$ are linearly independent, so \(u^T\mathbf e=v^T\mathbf e=0.\) Define
\[
C_1=\{i:u_i=1\},\quad C_2=\{i:u_i=-1\},\quad
D_1=\{i:v_i=1\},\quad D_2=\{i:v_i=-1\},
\]
and let $R$ consist of the remaining vertices. Put
\(
C:=C_1\cup C_2,
D:=D_1\cup D_2.\)
Then
\[
|C_1|=|C_2|=q,\quad |D_1|=|D_2|=s,
\]
with $q,s\ge1$.

The identity \(A-B=uv^T+vu^T\) already determines where the two graphs differ. Indeed, for \(i\in C\) and \(j\in D\),
\[
(A-B)_{ij}=u_i v_j.
\]
Hence \((A-B)_{ij}=1\) on \(C_1\times D_1\) and \(C_2\times D_2\), while \((A-B)_{ij}=-1\) on \(C_1\times D_2\) and \(C_2\times D_1\). Since \(A_{ij},B_{ij}\in\{0,1\}\), the \(C\times D\) blocks are therefore
\[
\begin{array}{c|cc}
      &D_1&D_2\\ \hline
C_1&J&0\\
C_2&0&J
\end{array}
\quad\text{in }A,
\qquad
\begin{array}{c|cc}
      &D_1&D_2\\ \hline
C_1&0&J\\
C_2&J&0
\end{array}
\quad\text{in }B,
\]
where \(J\) denotes an all-one block of the appropriate size. Outside \(C\times D\) and its transpose, \(uv^T+vu^T\) vanishes, so \(A\) and \(B\) agree.

By Lemma~\ref{lem:difference-rank-two},
\[
\ker W^T=\operatorname{Im}(A-B)=\operatorname{span}\{u,v\}.
\]
Lemma~\ref{lem:krylov} makes this plane $A$-invariant. Hence \(Au=\alpha u+\beta v, Av=\gamma u+\delta v\) for real $\alpha,\beta,\gamma,\delta$. Take $x\in D_1$. The checkerboard pattern gives $(Au)_x=q$, while $u_x=0$ and $v_x=1$; therefore $\beta=q$. Similarly, evaluating $Av$ at any vertex of $C_1$ gives $\gamma=s$. Thus
\[
Au=\alpha u+qv,\quad Av=su+\delta v.
\]
Since $C_1,D_1$ are nonempty, the same coordinate evaluations show that $\alpha,\delta\in\mathbb Z$.

Now read these two identities coordinatewise. The first yields
\[
d_{C_1}(x)-d_{C_2}(x)=\alpha\quad(x\in C_1),
\quad
d_{C_2}(x)-d_{C_1}(x)=\alpha\quad(x\in C_2),
\]
and $d_{C_1}(x)=d_{C_2}(x)$ for $x\in R$. Together with the checkerboard block, these are precisely the WQH conditions for $(C_1,C_2;D_1,D_2;R)$. The second identity gives the same conclusion with $C$ and $D$ interchanged. Hence the partition is reciprocal, and $H$ is exactly the corresponding reciprocal WQH mate of $G$.

Conversely, suppose that $H$ is obtained from $G$ by reciprocal WQH switching. Let
\[
u=\mathbf 1_{C_1}-\mathbf 1_{C_2},
\quad
v=\mathbf 1_{D_1}-\mathbf 1_{D_2}.
\]
By Proposition~\ref{prop:reciprocal-walks}, $W_H=W_G$ and \(\mathcal K_G\subseteq \operatorname{span}\{u,v\}^{\perp}.\) Hence \(\operatorname{span}\{u,v\}\subseteq\ker W_G^T.\) The vectors $u$ and $v$ are nonzero and have disjoint supports, so they are linearly independent. Since \(\operatorname{rank}W_G=n-2,\) we have \(\dim\ker W_G^T=2,\)
and therefore \(\ker W_G^T=\operatorname{span}\{u,v\}.\) Moreover, Proposition~\ref{prop:WQH-basic} gives \(A-B=uv^T+vu^T.\)

Finally, applying Proposition~\ref{prop:WQH-basic} from the two switching sides gives
\[
B=\left(I-\frac1q uu^T\right)
A
\left(I-\frac1q uu^T\right)
=
\left(I-\frac1s vv^T\right)
A
\left(I-\frac1s vv^T\right).
\]
\end{proof}

\begin{remark}\label{rem:two-reflections}
The ambiguity is confined to \(\ker W_G^T=\mathcal K_G^\perp,\) the two-dimensional orthogonal complement of the Krylov space. The same mate is obtained from the two reflections above, whose reflecting directions span this space.
\end{remark}

\section{The Minimum Order}\label{sec:minorder}

The lower-bound argument reduces the cases $n=8,9$ to two elementary facts about graphs on four vertices.

\begin{lemma}\label{lem:four-vertex}
Let $X$ be a graph on four vertices, partitioned as
\[
V(X)=X_1\sqcup X_2,
\quad
|X_1|=|X_2|=2.
\]
Then the following hold.
\begin{enumerate}
\item If \(w=\mathbf 1_{X_1}-\mathbf 1_{X_2}\) is an eigenvector of $A(X)$, then $X$ is regular.
\item If $X$ is regular and $\xi\in\{0,1\}^4$ has even weight, then \(A(X)\xi \in \operatorname{span}\{\mathbf 1_{V(X)},\xi\}.\)
\end{enumerate}
\end{lemma}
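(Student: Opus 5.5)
Both parts are finite checks, and the plan is to reduce each one to a few lines of bookkeeping rather than enumerate all graphs on four vertices.

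For part (1), write $X_1=\{a,b\}$ and $X_2=\{c,d\}$, and suppose $A(X)w=\lambda w$ with $w=\mathbf 1_{X_1}-\mathbf 1_{X_2}$. Reading this coordinatewise gives, for $x\in X_1$, $d_{X_1}(x)-d_{X_2}(x)=\lambda$, and for $x\in X_2$, $d_{X_2}(x)-d_{X_1}(x)=\lambda$. This is exactly the WQH balance condition of Definition~\ref{def:WQH} with $q=2$. Let $\epsilon_1,\epsilon_2\in\{0,1\}$ record whether the edges $ab$ and $cd$ are present, and let $m_x:=d_{X_2}(x)$ denote the cross degree. The conditions then read $m_a=m_b=\epsilon_1-\lambda$ and $m_c=m_d=\epsilon_2-\lambda$. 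Summing cross degrees on each side gives $m_a+m_b=m_c+m_d$, since both count the cross edges. Hence $\epsilon_1=\epsilon_2=:\epsilon$, and all four cross degrees equal $m$. Every vertex therefore has degree $\epsilon+m$, so $X$ is regular.

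For part (2), let $X$ be $k$-regular. An even-weight $\xi$ is $0$, $\mathbf 1$, or the indicator of a $2$-set $S=\{a,b\}$. The cases $\xi=0$ and $\xi=\mathbf 1$ are immediate, since $A\mathbf 1=k\mathbf 1$. For $\xi=\mathbf 1_S$, note that $\mathbf 1_S-\mathbf 1_{S^c}=2\xi-\mathbf 1$. So it suffices to show that $z:=\mathbf 1_S-\mathbf 1_{S^c}$ satisfies $Az\in\operatorname{span}\{\mathbf 1,z\}$. This is the converse of part (1): by regularity, $d_{S^c}(x)=k-d_S(x)$. On $S$ we have $d_S(a)=d_S(b)=\epsilon_1$, the indicator of the edge $ab$, and on $S^c$ we have $d_{S^c}(c)=d_{S^c}(d)=\epsilon_2$. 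It follows that $(Az)_x=2\epsilon_1-k$ for $x\in S$ and $(Az)_x=k-2\epsilon_2$ for $x\in S^c$. Regularity forces the number of cross edges to be $2(k-\epsilon_1)=2(k-\epsilon_2)$, so $\epsilon_1=\epsilon_2$. Thus $Az=(2\epsilon_1-k)z$, and $A\xi=\tfrac12(Az+k\mathbf 1)$ lies in $\operatorname{span}\{\mathbf 1,\xi\}$, as required.

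The only delicate point is the step $\epsilon_1=\epsilon_2$, which appears in both parts and comes from double-counting the cross edges. Everything else is coordinate reading. If the counting argument were to break down somewhere, the fallback is a direct enumeration of the three $2+2$ splittings of the regular graphs on four vertices ($\overline{K_4}$, $2K_2$, $C_4$, $K_4$).
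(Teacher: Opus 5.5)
Your proof is correct. Part (1) is essentially the paper's argument: you read $A(X)w=\lambda w$ coordinatewise, then double count the cross edges to force the two internal-edge indicators to coincide.

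Part (2) is where you genuinely diverge. The paper uses the list of regular graphs on four vertices ($4K_1$, $2K_2$, $C_4$, $K_4$). It then checks by hand each position of the $2$-set $\operatorname{supp}(\xi)$, recording $A(X)\xi$ explicitly case by case. You instead run part (1) backwards. Regularity gives $d_{S^c}(x)=k-d_S(x)$, and the same cross-edge count $2(k-\epsilon_1)=2(k-\epsilon_2)$ forces $\epsilon_1=\epsilon_2=\epsilon$. So $2\xi-\mathbf 1$ is an eigenvector with eigenvalue $2\epsilon-k$, whence $A(X)\xi=(2\epsilon-k)\xi+(k-\epsilon)\mathbf 1$.

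What your route buys:
\begin{itemize}
\item It needs no classification of regular four-vertex graphs.
\item It gives one uniform formula that reproduces every value in the paper's case list.
\item It shows the two parts are converse halves of a single fact: for a $2+2$ split of a four-vertex graph, $\mathbf 1_{X_1}-\mathbf 1_{X_2}$ is an eigenvector if and only if the graph is regular.
\end{itemize}
The paper's enumeration is more concrete and trivially checkable entry by entry. However, it relies on knowing the classification and offers no structural reason why the conclusion holds.
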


\begin{proof}
For (1), let $a,b\in\{0,1\}$ indicate whether the two vertices inside $X_1$ and $X_2$, respectively, are adjacent. If \(A(X)w=\lambda w,\) then every vertex of $X_1$ has $a-\lambda$ neighbours in $X_2$, while every vertex of $X_2$ has $b-\lambda$ neighbours in $X_1$. Double counting the edges between $X_1$ and $X_2$ gives $a=b$. Hence every vertex has degree \(a+(a-\lambda)=2a-\lambda,\) so $X$ is regular.

For (2), the cases in which $\xi$ has weight $0$ or $4$ are immediate, since $X$ is regular. It remains to consider \(\operatorname{wt}(\xi)=2.\) A regular graph on four vertices is isomorphic to one of \(4K_1, 2K_2, C_4, K_4.\) If $X=4K_1$, then \(A(X)\xi=0.\) If $X=2K_2$, then either the two vertices in $\operatorname{supp}(\xi)$ form an edge, in which case \(A(X)\xi=\xi,\) or they lie in different components, in which case \(A(X)\xi=\mathbf 1_{V(X)}-\xi.\) If $X=C_4$, then for two adjacent vertices in
$\operatorname{supp}(\xi)$, \(A(X)\xi=\mathbf 1_{V(X)},\) whereas for two opposite vertices, \(A(X)\xi=2\bigl(\mathbf 1_{V(X)}-\xi\bigr).\) Finally, if $X=K_4$, then
\(A(X)\xi=2\mathbf 1_{V(X)}-\xi.\) Thus in every case \(A(X)\xi \in \operatorname{span}\{\mathbf 1_{V(X)},\xi\}.\)
\end{proof}

\begin{proposition}\label{prop:no-small-order}
If $G,H$ are non-isomorphic and satisfy \(W_G=W_H, \operatorname{rank}W_G=n-2,\) then $n\ge10$.
\end{proposition}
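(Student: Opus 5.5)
By Theorem~\ref{thm:characterization}, $H$ is the reciprocal WQH mate of $G$ with respect to a partition $(C_1,C_2;D_1,D_2;R)$, where $|C_i|=q$ and $|D_i|=s$. The plan is to show that $q=1$ or $s=1$ forces $G\cong H$, and that $q,s\ge2$ together with $n\le 9$ contradicts $\operatorname{rank}W_G=n-2$.

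\textbf{The case $q=1$.} Let $C_1=\{a\}$ and $C_2=\{b\}$. Then $Q_u=I-uu^T$ is the permutation matrix of the transposition $(a\,b)$, so $B=Q_uAQ_u$ is a relabelling of $A$ and $G\cong H$. The case $s=1$ is symmetric, using the second formula of Theorem~\ref{thm:characterization}. Hence a non-isomorphic pair requires $q,s\ge2$, so $n\ge 4q+\ldots$ gives $n\ge 8$. It remains to rule out $n=8$ and $n=9$.

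\textbf{The case $n=8$.} Here $q=s=2$ and $R=\emptyset$. The plan is to exhibit an $A$-invariant subspace of dimension at most $5$ containing $\mathbf e$. Then $\operatorname{rank}W_G\le 5<6$, a contradiction.

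1. Let $X=G[C]$ and $Y=G[D]$. Since $Au=\alpha u+qv$ and the cross block contributes only multiples of $v$, $w=\mathbf 1_{C_1}-\mathbf 1_{C_2}$ is an eigenvector of $A(X)$. Lemma~\ref{lem:four-vertex}(1) then makes $X$ regular, and similarly $Y$ is regular.
2. The $C$–$D$ block is the checkerboard $J$ pattern. Each vertex of $C$ sees exactly $s=2$ vertices of $D$, and conversely.
3. So $A\mathbf 1_C$ and $A\mathbf 1_D$ lie in $\operatorname{span}\{\mathbf 1_C,\mathbf 1_D\}$, since the degrees inside and across are constant. This span contains $\mathbf e$, hence contains $\mathcal K_G$, so $\operatorname{rank}W_G\le2$. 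This contradicts $n-2=6$.

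\textbf{The case $n=9$.} Here $q=s=2$ and $|R|=1$, say $R=\{z\}$. The condition $d_{C_1}(z)=d_{C_2}(z)$, and the same with $D$, means $N(z)\cap C$ and $N(z)\cap D$ have even weight and are balanced. Steps 1 and 2 still give regular $X$ and $Y$, although the degree of $C$-vertices now depends on adjacency to $z$. Set $\xi=\mathbf 1_{N(z)\cap C}$ and $\eta=\mathbf 1_{N(z)\cap D}$. I propose the subspace
\[
\operatorname{span}\{\mathbf 1_C,\ \mathbf 1_D,\ \mathbf e_z,\ \xi,\ \eta\},
\]
which has dimension at most $5$ and contains $\mathbf e$.

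Checking invariance:
1. $A\mathbf e_z=\xi+\eta$.
2. $A\mathbf 1_C$ equals a constant times $\mathbf 1_C$, plus $2\mathbf 1_D$, plus $|N(z)\cap C|\,\mathbf e_z$, and similarly for $A\mathbf 1_D$.
3. $A\xi$ has a $C$-part $A(X)\xi$. This lies in $\operatorname{span}\{\mathbf 1_C,\xi\}$ by Lemma~\ref{lem:four-vertex}(2), since $\xi$ has even weight.
4. The $z$-part of $A\xi$ is a multiple of $\mathbf e_z$.
5. The $D$-part of $A\xi$ is the checkerboard applied to $\xi$. Because $\xi$ is balanced between $C_1$ and $C_2$ (weight $0$, $2$ or $4$, with equal counts), this gives a constant vector on $D$, i.e.\ a multiple of $\mathbf 1_D$.
6. $A\eta$ is handled symmetrically.

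So the subspace is $A$-invariant and $\operatorname{rank}W_G\le5<7$, a contradiction.

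The main obstacle is this $n=9$ invariance check, specifically confirming that the checkerboard maps a balanced $\xi$ to a multiple of $\mathbf 1_D$. I would verify it directly: each $D_1$-vertex sees all of $C_1$, so it sees $|\xi\cap C_1|=|\xi\cap C_2|$ vertices of $\xi$, and the same holds for $D_2$.
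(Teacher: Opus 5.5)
Your proposal is correct and follows the paper's proof essentially step for step. The transposition argument forces $q,s\ge2$; for $n=8$, regularity of $G[C]$ and $G[D]$ via Lemma~\ref{lem:four-vertex}(1) gives the invariant plane $\operatorname{span}\{\mathbf 1_C,\mathbf 1_D\}$; for $n=9$, you use the same invariant subspace $\operatorname{span}\{\mathbf 1_C,\mathbf 1_D,\mathbf e_z,\xi,\eta\}$, and your check that the checkerboard sends the balanced $\xi$ to $a\mathbf 1_D$ is exactly the paper's term in $A\xi=A_C\xi+a\mathbf 1_D+2a\mathbf e_z$. (The only blemish is cosmetic: in the $n=8$ case you announce a subspace of dimension at most $5$, but you actually produce, and need, only the two-dimensional one.)
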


\begin{proof}
Let \(A=A_G\), and choose $u,v$ and the reciprocal WQH partition as in Theorem~\ref{thm:characterization}. Put \(q=|C_1|=|C_2|, s=|D_1|=|D_2|.\) If $q=1$, then \(Q_u=I-uu^T\) is simply the permutation matrix interchanging the two vertices of $C_1\cup C_2$. Hence $G\cong H$. The same argument applies if $s=1$. Thus a non-isomorphic pair must have $q,s\ge2$, and therefore $n\ge8$.

\emph{Case $n=8$.} Here $q=s=2$ and $R=\varnothing$. Put $C=C_1\cup C_2$ and $D=D_1\cup D_2$. From \(Au=\alpha u+2v, Av=2u+\delta v\) and Lemma~\ref{lem:four-vertex}(1), both $G[C]$ and $G[D]$ are regular, say of degrees $k_C,k_D$. The checkerboard block contributes two neighbours across the partition to every vertex. Hence
\[
A\mathbf1_C=k_C\mathbf1_C+2\mathbf1_D,
\quad
A\mathbf1_D=2\mathbf1_C+k_D\mathbf1_D.
\]
Thus $\operatorname{span}\{\mathbf1_C,\mathbf1_D\}$ is $A$-invariant and contains $\mathbf e$. Therefore $\operatorname{rank}W_G\le2$, contradicting $\operatorname{rank}W_G=6$.

\emph{Case $n=9$.} Again $q=s=2$, but now $R=\{z\}$. The induced graphs $G[C]$ and $G[D]$ are regular. Set \(\xi=\mathbf 1_{N(z)\cap C}, \eta=\mathbf 1_{N(z)\cap D}.\)
Let $A_C$ and $A_D$ denote the adjacency matrices of $G[C]$ and $G[D]$, respectively. By a slight abuse of notation, we also use $A_C$ and $A_D$ for their zero extensions to $\mathbb R^{V(G)}$. By reciprocal balance,
\[
|N(z)\cap C_1|=|N(z)\cap C_2|=a,
\quad
|N(z)\cap D_1|=|N(z)\cap D_2|=b.
\]
Hence \(\operatorname{wt}(\xi)=2a, \operatorname{wt}(\eta)=2b,\) so both $\xi$ and $\eta$ have even weight. Therefore Lemma~\ref{lem:four-vertex}(2) gives
\[
A_C\xi\in\operatorname{span}\{\mathbf1_C,\xi\},
\quad
A_D\eta\in\operatorname{span}\{\mathbf1_D,\eta\}.
\]
Then
\[
\begin{aligned}
A\mathbf1_C&=k_C\mathbf1_C+2\mathbf1_D+2a\mathbf e_z,\\
A\mathbf1_D&=2\mathbf1_C+k_D\mathbf1_D+2b\mathbf e_z,\\
A\mathbf e_z&=\xi+\eta,\\
A\xi&=A_C\xi+a\mathbf1_D+2a\mathbf e_z,\\
A\eta&=b\mathbf1_C+A_D\eta+2b\mathbf e_z.
\end{aligned}
\]
Consequently \(\mathcal L=\operatorname{span}\{\mathbf1_C,\mathbf1_D,\mathbf e_z,\xi,\eta\}\) is $A$-invariant and contains $\mathbf e$. Thus $\operatorname{rank}W_G\le5$, again impossible because the required rank is $7$.
\end{proof}

The lower bound is attained at order $10$. Let
\[
C_1=\{1,2\},\quad C_2=\{3,4\},\quad
D_1=\{5,6\},\quad D_2=\{7,8\},\quad R=\{9,10\},
\]
and set
\[
F=\{(1,4),(1,9),(1,10),(2,3),(3,9),(4,10), (5,9),(5,10),(7,10),(8,9)\}.
\]
Here $(i,j)$ means the unordered edge $\{i,j\}$, and $C_i\times D_j$ denotes the complete bipartite edge set between $C_i$ and $D_j$. Define
\[
E(G_{10})=F\cup(C_1\times D_1)\cup(C_2\times D_2),
\quad
E(H_{10})=F\cup(C_1\times D_2)\cup(C_2\times D_1).
\]
Thus the two graphs are completely specified by the common edge set $F$ together with the two complementary checkerboard blocks above.

\begin{proposition}\label{prop:order-ten}
The graphs $G_{10},H_{10}$ are connected and non-isomorphic, and \(W_{G_{10}}=W_{H_{10}}, \operatorname{rank}W_{G_{10}}=8.\)
\end{proposition}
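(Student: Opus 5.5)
The plan is to reduce the walk-matrix equality to Proposition~\ref{prop:reciprocal-walks} by checking the reciprocal WQH conditions directly, and then to settle the rank, connectivity and non-isomorphism by finite checks.

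\textbf{Reciprocal WQH partition.} First I check that $(C_1,C_2;D_1,D_2;R)$ is a reciprocal WQH partition of $G_{10}$.

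\emph{Edges inside $C$.} The edges of $F$ inside $C=\{1,2,3,4\}$ are $(1,4)$ and $(2,3)$. Hence each of $1,2$ has no neighbour in $C_1$ and one in $C_2$, and symmetrically for $3,4$. This gives $\alpha=-1$.

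\emph{Edges inside $D$.} $F$ has no edge inside $D=\{5,6,7,8\}$, so $\delta=0$.

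\emph{Balance at $R$.} Vertex $9$ meets $C$ in $\{1,3\}$ and $D$ in $\{5,8\}$. Vertex $10$ meets $C$ in $\{1,4\}$ and $D$ in $\{5,7\}$. In each case there is exactly one neighbour in each of $C_1,C_2,D_1,D_2$.

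\emph{Checkerboard blocks.} These hold by construction, and $H_{10}$ is by definition the reciprocal WQH mate of $G_{10}$.

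Proposition~\ref{prop:reciprocal-walks} then gives $W_{G_{10}}=W_{H_{10}}$ and $\operatorname{rank}W_{G_{10}}\le 8$.

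\textbf{Rank exactly $8$.} By Lemma~\ref{lem:krylov} and Proposition~\ref{prop:reciprocal-walks}, $\mathcal K_G\subseteq L^\perp$ with $L=\operatorname{span}\{u,v\}$. So it suffices to show that $\mathbf e,A\mathbf e,\dots,A^7\mathbf e$ are linearly independent. I would do this by exhibiting a nonzero $8\times8$ minor of $W_{G_{10}}$, computed exactly over $\mathbb Z$. An equivalent route is to show that $G_{10}$ has eight distinct main eigenvalues, using the characteristic polynomial of $A$ restricted to $L^\perp$ together with the main part $\mathbf e^T(xI-A)^{-1}\mathbf e$. This is a routine but unavoidable finite computation, and I expect it to be the main obstacle: there is no structural shortcut. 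The quickest certificate is reducing $W$ modulo a suitable prime and finding eight independent columns there.

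\textbf{Connectivity.} Every vertex of $C\cup D$ lies in one of the complete blocks $C_1\times D_1$, $C_2\times D_2$ in $G_{10}$, or $C_1\times D_2$, $C_2\times D_1$ in $H_{10}$. The edges $(1,4)$ and $(2,3)$ join the two blocks. Vertices $9$ and $10$ are adjacent to vertex $1$. Hence both graphs are connected.

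\textbf{Non-isomorphism.} Degree sequences agree, since $A\mathbf e=B\mathbf e$, so a finer invariant is needed. I would first try triangle counts, i.e.\ $\operatorname{tr}A^3$, but this is a spectral invariant and the graphs are cospectral, so it cannot distinguish them. Instead I would use a local invariant such as the multiset of (degree, number of triangles through the vertex), or the degree multisets of neighbourhoods.

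In $G_{10}$ vertex $9$ is adjacent to $1$ and $5$, and $1\sim5$, so $\{1,5,9\}$ is a triangle through vertex $9$. In $H_{10}$ we have $1\not\sim5$, so this triangle is lost; the triangle $\{3,5,9\}$ appears instead. I would tabulate triangles through each vertex of degree $3$ in $R$ and through the vertices of $C\cup D$ until the local multisets differ. If that fails, I would compare the subgraphs induced by the neighbourhoods of the degree-$5$ vertices.
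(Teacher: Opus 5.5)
Your verification of the reciprocal WQH conditions and your connectivity argument are correct and match the paper. You find $\alpha=-1$, $\delta=0$, and that each of $9,10$ has exactly one neighbour in every class; the paper packages the same checks as $Au=-u+2v$, $Av=2u$. The gap is that the two claims carrying the real content are left as searches rather than proved. For $\operatorname{rank}W_{G_{10}}\ge 8$ you only say you would find a nonzero $8\times 8$ minor, but for a single explicit pair the certificate \emph{is} the proof. The paper gives one: the minor of $[\mathbf e,A\mathbf e,\ldots,A^7\mathbf e]$ on rows $1,2,3,5,6,7,9,10$ has determinant $48$. Since $48=2^4\cdot 3$, reducing these rows modulo $2$ or $3$ would certify nothing. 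Alternatively, the reduced coronal $h_0/m_0$ with $\deg m_0=8$ in the appendix shows there are eight main eigenvalues.

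The non-isomorphism step has a more serious problem: your leading invariant provably cannot succeed. By Theorem~\ref{thm:characterization}, $B=Q_uAQ_u=Q_vAQ_v$ with $Q_u^2=Q_v^2=I$. Moreover $Q_u\mathbf e_i=\mathbf e_i$ when $u_i=0$, and $Q_v\mathbf e_i=\mathbf e_i$ when $v_i=0$. Since $u,v$ have disjoint supports, $(B^k)_{ii}=(A^k)_{ii}$ for every vertex $i$ and every $k$. Hence, in the given labelling, each vertex has the same degree and the same number of triangles in both graphs. Here the per-vertex triangle counts are $3,0,1,2,2,0,1,1,2,3$ in both $G_{10}$ and $H_{10}$, so tabulating (degree, triangles) pairs never reveals a difference. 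The same holds for any vertex-wise closed-walk or walk-count data. (Also, vertices $9,10$ have degree $4$, not $3$.)

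You need an invariant that sees how such data are arranged along edges. Your second suggestion, neighbour-degree multisets, is exactly what the paper uses: vertex $6$ is the unique vertex of degree $2$, and its neighbours have degrees $\{5,3\}$ in $G_{10}$ but $\{4,4\}$ in $H_{10}$. Your fallback also works. For the unique degree-$5$ vertex $1$, $G_{10}[N(1)]$ has edges $4$--$10$, $10$--$5$, $5$--$9$, so it is $P_4\cup K_1$. By contrast, $H_{10}[N(1)]$ has edges $4$--$10$, $10$--$7$, $8$--$9$, so it is $P_3\cup K_2$. Carrying out either computation closes the gap.
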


\begin{proof}
Put
\[
u=(1,1,-1,-1,0,0,0,0,0,0)^T,
\quad
v=(0,0,0,0,1,1,-1,-1,0,0)^T.
\]
Let \(A=A(G_{10})\). A direct calculation gives
\(
Au=-u+2v, Av=2u.\)
Together with the checkerboard form built into the definition of \(G_{10}\), these identities give the WQH balance conditions from both sides. Hence the displayed partition is reciprocal. Proposition~\ref{prop:reciprocal-walks} gives $W_{G_{10}}=W_{H_{10}}$ and rank at most $8$.

Both graphs are connected. Indeed, the checkerboard part gives two connected bipartite pieces on $C_1\cup D_1$ and $C_2\cup D_2$ (or the crossed pair in $H_{10}$), the edge $(1,4)$ joins them, and each of $9,10$ has a neighbour in this core. On the other hand, the minor of
\[
[e,Ae,\ldots,A^7e]
\]
on rows
\[
\{1,2,3,5,6,7,9,10\}
\]
has determinant $48\neq0$; see Appendix~\ref{app:seed-verification} for the explicit matrix. Hence $\operatorname{rank}W_{G_{10}}\ge8$. Together with the upper bound from Proposition~\ref{prop:reciprocal-walks}, we obtain $\operatorname{rank}W_{G_{10}}=8$.

Finally, in the displayed labelling both graphs have degree list $(5,3,4,4,4,2,3,3,4,4)$. Vertex $6$ is the unique vertex of degree $2$. Its neighbours have degrees $\{5,3\}$ in $G_{10}$ and $\{4,4\}$ in $H_{10}$. No isomorphism can preserve these local degree data. Hence $G_{10}\not\cong H_{10}$.
\end{proof}

\begin{corollary}\label{cor:min-order}
The smallest order of a pair of non-isomorphic graphs $G,H$ satisfying \(W_G=W_H, \operatorname{rank}W_G=n-2\) is $10$. In particular, the conjecture of Liu and Siemons \cite[Remark~5.2]{LiuSiemons2022} is false.
\end{corollary}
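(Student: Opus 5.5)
The corollary follows by combining the two results that precede it. For the lower bound I will invoke Proposition~\ref{prop:no-small-order}. It says that whenever $G\not\cong H$, $W_G=W_H$ and $\operatorname{rank}W_G=n-2$, we must have $n\ge10$. So no counterexample pair exists at any order $n\le 9$.

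Its argument runs as follows. Theorem~\ref{thm:characterization} produces a reciprocal WQH partition $(C_1,C_2;D_1,D_2;R)$. If one side of the partition has size one, the switching is a transposition, which forces $q,s\ge2$ and hence $n\ge8$. The cases $n=8$ and $n=9$ are then ruled out via Lemma~\ref{lem:four-vertex}: the Krylov space is trapped in an $A$-invariant subspace of dimension at most $2$ or $5$, respectively.

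For attainment I will invoke Proposition~\ref{prop:order-ten}. The explicit pair $G_{10},H_{10}$ is non-isomorphic, has $W_{G_{10}}=W_{H_{10}}$, and has rank $8=10-2$. Hence order $10$ is achieved, and the minimum is exactly $10$.

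For the statement about Liu and Siemons, their conjecture \cite[Remark~5.2]{LiuSiemons2022} asserts that any two distinct graphs with equal walk matrices of rank $n-2$ are isomorphic. The pair $G_{10},H_{10}$ contradicts this directly, so the conjecture is false.

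The only step requiring care is the non-isomorphism certificate and the rank computation in Proposition~\ref{prop:order-ten}, but both are already established there. The corollary itself involves no further obstacle.
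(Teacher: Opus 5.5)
Your proposal is correct and follows the paper's proof exactly. The paper likewise takes the lower bound from Proposition~\ref{prop:no-small-order} and attainment from the pair $G_{10},H_{10}$ of Proposition~\ref{prop:order-ten}, which also refutes the Liu--Siemons conjecture.
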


\begin{proof}
Proposition~\ref{prop:no-small-order} gives the lower bound, and Proposition~\ref{prop:order-ten} attains it.
\end{proof}

\section{Infinite Families}\label{sec:families}
For a graph $X$, define
\[
\mathcal U(X)=X\vee K_1,
\quad
\mathcal T(X)=(X\cup K_1)\vee K_1.
\]
Here $\vee$ denotes the graph join and $\cup$ the disjoint union. Thus $\mathcal U$ adds a universal vertex, while $\mathcal T$ first adds an isolated vertex and then a universal one. Singleton union/join operations have proved useful in related walk-matrix constructions \cite{LiuSiemonsWang2019}.

\begin{lemma}\label{lem:UT-reciprocal}
If $G,H$ are reciprocal WQH mates, then so are the pairs
\[
\mathcal U(G),\mathcal U(H)
\quad\text{and}\quad
\mathcal T(G),\mathcal T(H).
\]
\end{lemma}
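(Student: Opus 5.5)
We prove the lemma directly from Definitions~\ref{def:WQH} and~\ref{def:reciprocal}. The idea is to keep the same switching parts and put every new vertex into the rest set $R$.

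Let $(C_1,C_2;D_1,D_2;R)$ be a reciprocal WQH partition of $G$ whose mate is $H$. For $\mathcal U(G)$, write $w$ for the new universal vertex and take the partition $(C_1,C_2;D_1,D_2;R\cup\{w\})$. For $\mathcal T(G)$, write $z$ for the isolated vertex and $w$ for the universal one, and take $(C_1,C_2;D_1,D_2;R\cup\{z,w\})$.

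We then check the WQH conditions on each side.
\begin{enumerate}
\item The checkerboard blocks between $C_1\cup C_2$ and $D_1\cup D_2$ are unchanged, because the operations add no edges among old vertices.
\item For an old vertex $x\in C_1$, the quantity $d_{C_1}(x)-d_{C_2}(x)$ counts only neighbours inside $C$. These are unchanged, so it still equals $\alpha$. The same holds for $x\in C_2$.
\item Old vertices of $R$ keep the balance $d_{C_1}=d_{C_2}$.
\item The vertex $w$ is adjacent to all of $C$, so $d_{C_1}(w)=q=d_{C_2}(w)$.
\item The vertex $z$ has no neighbours in $C$, so $d_{C_1}(z)=0=d_{C_2}(z)$.
\end{enumerate}
The same checks with $C$ and $D$ interchanged, using $s=|D_1|=|D_2|$ for $w$, give the reversed partition. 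Hence the new partition is reciprocal.

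An equivalent matrix check is available. Extend $u,v$ by zeros to the new vertices. Since $u^T\mathbf e=v^T\mathbf e=0$, the new rows $w$ and $z$ of $A'u$ and $A'v$ vanish, so $A'u=\alpha u+qv$ and $A'v=su+\delta v$ still hold. This is the form used in the proof of Theorem~\ref{thm:characterization}.

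It remains to show that switching commutes with the operations. The switched graph of $\mathcal U(G)$ (resp.\ $\mathcal T(G)$) differs from it exactly on the $C\times D$ blocks, where the two checkerboards are exchanged. No edge at $w$ or $z$ is touched, because $uv^T+vu^T$ vanishes on the rows and columns of the new vertices. Therefore the switched graph is $\mathcal U(H)$ (resp.\ $\mathcal T(H)$), as the new vertices are attached to $H$ in the same way.

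The only point needing care is the universal vertex $w$. It must be checked on both sides of the reciprocity, and balance holds there precisely because $|C_1|=|C_2|$ and $|D_1|=|D_2|$. The remaining steps are bookkeeping.
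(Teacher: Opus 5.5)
Your proof is correct and follows the same route as the paper: you keep $C_1,C_2,D_1,D_2$, place the new vertices in $R$, and note that the universal vertex is balanced because $|C_1|=|C_2|$ and $|D_1|=|D_2|$, while the union vertex has no neighbours in $C\cup D$. Your extra check that the switched graph of $\mathcal U(G)$ (resp.\ $\mathcal T(G)$) is exactly $\mathcal U(H)$ (resp.\ $\mathcal T(H)$) makes explicit a point the paper leaves implicit.
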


\begin{proof}
Keep $C_1,C_2,D_1,D_2$ unchanged and put the new vertices into the
balanced class $R$. The universal vertex has equally many neighbours
in $C_1,C_2$ and in $D_1,D_2$, while the vertex introduced by the
disjoint union has no neighbours in
$C_1\cup C_2\cup D_1\cup D_2$.
Hence all balance conditions remain valid, and reciprocity is
preserved.
\end{proof}

To control the walk rank, we use the coronal
\[
\Gamma_X(x)=\mathbf e^T(xI-A_X)^{-1}\mathbf e.
\]
Write the coronal in reduced form as
\[
\Gamma_X(x)=\frac{h(x)}{m(x)},
\]
with $m$ monic. By the spectral decomposition,
\[
\Gamma_X(x)=
\sum_{\lambda\ {\rm main}}
\frac{\|E_\lambda \mathbf e\|^2}{x-\lambda},
\]
so its poles are precisely the distinct main eigenvalues of $X$. Thus
\[
m(x)=\prod_{\lambda\ {\rm main}}(x-\lambda)
\]
is the main polynomial of $X$. Hence, by the standard relation between main eigenvalues and the walk matrix~\cite{Hagos2002,LiuSiemons2022}, \(\deg m=\operatorname{rank}W_X.\)

\begin{proposition}\label{prop:rank-lifting}
Let \(\Gamma_X(x)=\frac{h(x)}{m(x)}\) be reduced, with $m$ monic. Write the reduced coronals as
\[
\Gamma_{U(X)}(x)=\frac{h_U(x)}{m_U(x)},
\quad
\Gamma_{T(X)}(x)=\frac{h_T(x)}{m_T(x)},
\]
where $m_U$ and $m_T$ are monic. Then the following hold.

\begin{enumerate}
\item \[\Gamma_{U(X)}(x) = \frac{(x+2)h(x)+m(x)}{xm(x)-h(x)}.\]
If $m(-1)+h(-1)\neq0$, then \(\operatorname{rank}W_{U(X)}= \operatorname{rank}W_X+1,\) and \(m_U(0)=-h(0), h_U(-1)=m(-1)+h(-1).\)

\item \[\Gamma_{T(X)}(x)=\frac{x(x+2)h(x)+2(x+1)m(x)}{(x^2-1)m(x)-xh(x)}.\]
If $m(0)h(-1)\neq0$, then \(\operatorname{rank}W_{T(X)}=\operatorname{rank}W_X+2,\) and \(m_T(0)=-m(0), h_T(-1)=-h(-1).\) In particular, the two non-vanishing conditions \(m_T(0)\neq0, h_T(-1)\neq0\) persist under iteration of $T$.
\end{enumerate}
\end{proposition}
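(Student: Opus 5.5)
We first derive the two coronal formulas from a Schur-complement computation. Write $\Gamma=\Gamma_X$. For $\mathcal U(X)$, with the new vertex $w$ joined to all of $X$, let $M=xI-A_{U(X)}$, partitioned with $X$ first and $w$ last. The Schur complement of the $X$-block is
\[
\sigma(x)=x-\mathbf e^T(xI-A_X)^{-1}\mathbf e=x-\Gamma .
\]
Block inversion gives $\mathbf e^TM^{-1}\mathbf e=\Gamma+(1+\Gamma)^2/(x-\Gamma)$. Put everything over the common denominator $x-\Gamma$; the numerator simplifies to $x\Gamma+2\Gamma+1$. Substituting $\Gamma=h/m$ yields
\[
\Gamma_{U(X)}=\frac{(x+2)h+m}{xm-h}.
\]
For $\mathcal T(X)$, we first note $\Gamma_{X\cup K_1}=\Gamma+1/x$. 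Then we apply the same universal-vertex formula in the form $(x\Gamma'+2\Gamma'+1)/(x-\Gamma')$ with $\Gamma'=(xh+m)/(xm)$. Clearing denominators by multiplying through by $xm$ gives numerator $x(x+2)h+(x+2)m+xm=x(x+2)h+2(x+1)m$ and denominator $x^2m-xh-m$, as stated.

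Next, the rank statements. Since $\deg m=\operatorname{rank}W$ by the coronal discussion above, it suffices to show that each displayed fraction is already reduced, or to control exactly how much cancels. Let $p=\gcd$ of the displayed numerator and denominator. Note $\gcd(h,m)=1$ and $\deg h<\deg m$. The displayed denominators are monic of degree $\deg m+1$ and $\deg m+2$ respectively.

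\emph{Case $\mathcal U$.} If $p(\lambda)=0$, then $\lambda m(\lambda)=h(\lambda)$ and $(\lambda+2)h(\lambda)+m(\lambda)=0$. Together these give $m(\lambda)\bigl(\lambda(\lambda+2)+1\bigr)=0$, i.e. $m(\lambda)(\lambda+1)^2=0$. If $m(\lambda)=0$, then $h(\lambda)=0$, which contradicts coprimality. So $\lambda=-1$, and then $h(-1)=-m(-1)$. This is excluded by the hypothesis $m(-1)+h(-1)\ne0$. Hence the fraction is reduced and the rank rises by one. We then read off $m_U(0)=-h(0)$ and $h_U(-1)=h(-1)+m(-1)$ directly.

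\emph{Case $\mathcal T$.} A common root $\lambda$ satisfies
\[
(\lambda^2-1)m=\lambda h,\qquad \lambda(\lambda+2)h=-2(\lambda+1)m .
\]
If $\lambda=0$, the first relation gives $m(0)=0$, which is excluded. Otherwise, eliminating $h$ gives $(\lambda+1)m\bigl[(\lambda-1)(\lambda+2)+2\bigr]=0$, i.e. $\lambda(\lambda+1)^2m(\lambda)=0$. The factor $m(\lambda)=0$ forces $h(\lambda)=0$, contradicting coprimality. So $\lambda=-1$, and then the first relation gives $h(-1)=0$, which is excluded. Hence the fraction is reduced, the denominator has degree $\deg m+2$, and the rank rises by two. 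Evaluation gives $m_T(0)=-m(0)$ and $h_T(-1)=(-1)(1)h(-1)+0=-h(-1)$. These are nonzero, so the hypotheses reproduce themselves under iteration of $\mathcal T$.

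The main obstacle is only the bookkeeping in the elimination step. One must be careful that the cancelled roots can only be $0$ or $-1$, and that coprimality of $h,m$ rules out common roots with $m$. The rest is algebraic verification.
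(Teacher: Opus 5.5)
Your proposal is correct and follows essentially the same route as the paper: your Schur-complement block inversion is the paper's linear solve and gives $\Gamma_{\mathcal U(X)}=\bigl(1+(x+2)\Gamma\bigr)/(x-\Gamma)$, you obtain the $\mathcal T$ formula by applying this to $X\cup K_1$, and coprimality of $h,m$ confines any cancellation to $\lambda\in\{0,-1\}$, where the hypotheses rule it out. The only cosmetic difference is that you eliminate $h$ by direct substitution, handling $\lambda=0$ first. The paper instead uses the linear-combination identities "$(x+2)\cdot$denominator $+$ numerator $=(x+1)^2m$", respectively $=x(x+1)^2m$.
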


\begin{proof}
Since \(\Gamma_X\) is proper, \(\deg h<\deg m\).
Write
\[
\gamma(x):=\Gamma_X(x)
=\mathbf e^T(xI-A_X)^{-1}\mathbf e.
\]
The adjacency matrix of \(\mathcal U(X)\) is
\[
\begin{pmatrix}
  A_X & \mathbf e\\
    \mathbf e^T & 0
\end{pmatrix}.
\]
Set \(\mathcal R(x):=(xI-A_X)^{-1}.\) To compute the coronal of \(\mathcal U(X)\), solve
\[
\begin{pmatrix}
 xI-A_X & -\mathbf e\\
  -\mathbf e^T & x
\end{pmatrix}
\binom{y}{z}
 = \binom{\mathbf e}{1}.
\]
The first block equation gives \(y=(1+z)\mathcal R(x)\mathbf e.\) Substituting this into the second equation yields \(z=\frac{1+\gamma}{x-\gamma}.\) Consequently,
\[
\Gamma_{\mathcal U(X)}(x)
=\mathbf e^Ty+z
=\gamma+(1+\gamma)z
=\frac{1+(x+2)\gamma}{x-\gamma}.
\]
Since \(\gamma=h/m\), we obtain
\[
\Gamma_{\mathcal U(X)}(x)=
\frac{(x+2)h+m}{xm-h}.
\]

Set \(D_U=xm-h, N_U=(x+2)h+m.\) Then \((x+2)D_U+N_U=(x+1)^2m.\) If $\lambda$ is a common zero of $D_U$ and $N_U$, the preceding identity gives \((\lambda+1)^2m(\lambda)=0.\)
If $m(\lambda)=0$, then, because $\gcd(h,m)=1$,
\(
D_U(\lambda)
=\lambda m(\lambda)-h(\lambda)
=-h(\lambda)\neq0,
\)
a contradiction. Hence necessarily $\lambda=-1$.

At this point,
\[
D_U(-1)=-m(-1)-h(-1),
\quad
N_U(-1)=m(-1)+h(-1).
\]
Thus the condition $m(-1)+h(-1)\neq0$ rules out every possible cancellation. The denominator is then monic of degree $\deg m+1$, proving the rank increase. The evaluations at $0$ and $-1$ give the recurrence formulas.

For \(\mathcal T\), put \(Y=X\cup K_1\). Since the coronal is additive under disjoint union,
\[
\Gamma_Y(x)=
\Gamma_X(x)+\frac1x=\frac{xh+m}{xm}.
\]
Now \(\mathcal T(X)=\mathcal U(Y).\) Applying the formula just proved to \(Y\) gives
\[
\Gamma_{\mathcal T(X)}(x)=
\frac{1+(x+2)\Gamma_Y(x)}{x-\Gamma_Y(x)}=
\frac{x(x+2)h+2(x+1)m}{(x^2-1)m-xh}.
\]

Put
\(
D_T(x):=(x^2-1)m(x)-xh(x),
N_T(x):=x(x+2)h(x)+2(x+1)m(x).\)
Then
\[
(x+2)D_T(x)+N_T(x)=x(x+1)^2m(x).
\]

Suppose that $\lambda$ is a common zero of $D_T$ and $N_T$. The preceding identity gives \(\lambda(\lambda+1)^2m(\lambda)=0.\) If $m(\lambda)=0$, then, since $\gcd(h,m)=1$, we have $h(\lambda)\neq0$. But \(D_T(\lambda)=(\lambda^2-1)m(\lambda)-\lambda h(\lambda)=-\lambda h(\lambda).\) Thus, if $\lambda\neq0$, this contradicts $D_T(\lambda)=0$. If $\lambda=0$, then this case is already among the exceptional values singled out by the factor $\lambda(\lambda+1)^2$.

Consequently, any cancellation can occur only at $\lambda=0$ or $\lambda=-1$. At these two points, \(D_T(0)=-m(0), N_T(0)=2m(0),\) and \(D_T(-1)=h(-1), N_T(-1)=-h(-1).\)
Hence the condition $m(0)h(-1)\neq0$ rules out all possible cancellation.

Therefore the displayed fraction for $\Gamma_{T(X)}$ is already reduced. Since $m$ is monic and $\deg h<\deg m$, the denominator $D_T$ is monic of degree $\deg m+2$.

It follows that
\[
\operatorname{rank}W_{T(X)}=\deg m+2
=\operatorname{rank}W_X+2.
\]
Moreover, in the notation
\[
\Gamma_{T(X)}(x)=\frac{h_T(x)}{m_T(x)},
\]
we have \(m_T=D_T, h_T=N_T,\) and hence \(m_T(0)=-m(0), h_T(-1)=-h(-1).\) Thus, if $m(0)h(-1)\neq0$, then also $m_T(0)h_T(-1)\neq0$, so the same non-vanishing conditions persist under iteration of $T$.
\end{proof}

For the seed \(G_{10}\), a direct determinant calculation yields the following reduced coronal:
\[
\Gamma_{G_{10}}(x)=\frac{h_0(x)}{m_0(x)},
\]
where
\[
\begin{aligned}
m_0(x)={}&x^8-x^7-13x^6-x^5+38x^4+12x^3-29x^2-5x+4,\\
h_0(x)={}&10x^7+26x^6-30x^5-102x^4+2x^3+86x^2+8x-12.
\end{aligned}
\]
The calculation is recorded explicitly in Appendix~\ref{app:seed-verification}. The polynomials are coprime and
\[
m_0(0)=4,\quad m_0(-1)=-4,
\quad h_0(0)=-12,\quad h_0(-1)=8.
\]
Thus Proposition~\ref{prop:rank-lifting} applies exactly as required.

\begin{lemma}\label{lem:noniso-operations}
If $X,Y$ are connected, non-isomorphic graphs of order at least $2$, then \(\mathcal T(X)\not\cong\mathcal T(Y).\) Moreover, $\mathcal U(G_{10})\not\cong\mathcal U(H_{10})$.
\end{lemma}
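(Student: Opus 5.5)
For the first assertion the plan is to recover $X$ from $\mathcal T(X)$ up to isomorphism. Write $n=|V(X)|\ge2$, so $\mathcal T(X)$ has $n+2$ vertices: $n$ vertices of $X$, an isolated-then-joined vertex $w$ of degree $1$, and a universal vertex $t$ of degree $n+1$.

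The first step is to identify $w$ and $t$ intrinsically. Every vertex of $X$ has degree $d_X(x)+1\ge2$ in $\mathcal T(X)$, because $X$ is connected of order at least $2$. Hence $w$ is the unique vertex of degree $1$, and $t$ is its unique neighbour. Consequently $\mathcal T(X)-\{w,t\}\cong X$ canonically. Any isomorphism $\mathcal T(X)\to\mathcal T(Y)$ must send the unique degree-one vertex to the unique degree-one vertex, and its neighbour to its neighbour. It therefore restricts to an isomorphism $X\to Y$, a contradiction. Connectivity is used only to exclude isolated vertices of $X$ (which would have degree $1$ in $\mathcal T(X)$).

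For the second assertion, $\mathcal U(G_{10})$ and $\mathcal U(H_{10})$ have $11$ vertices, and the new vertex has degree $10$. By Proposition~\ref{prop:order-ten}, the degree list of $G_{10}$ is $(5,3,4,4,4,2,3,3,4,4)$, so old vertices have degree at most $6$ in $\mathcal U$. The universal vertex is thus the unique vertex of degree $10$, and any isomorphism $\mathcal U(G_{10})\to\mathcal U(H_{10})$ fixes it and restricts to an isomorphism $G_{10}\to H_{10}$. This contradicts Proposition~\ref{prop:order-ten}.

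Alternatively, one can rerun the local invariant directly. Vertex $6$ is the unique vertex of degree $3$ in $\mathcal U$, and its neighbours have degrees $\{6,4,10\}$ in $\mathcal U(G_{10})$ versus $\{5,5,10\}$ in $\mathcal U(H_{10})$. This needs the checkerboard neighbours of $6$: vertices $1,2$ in $G_{10}$ and $3,4$ in $H_{10}$.

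The only real obstacle is making sure the special vertices are uniquely determined by degree. In $\mathcal T$ this is where the hypotheses ``connected'' and ``order $\ge2$'' enter. In $\mathcal U$ it holds because $G_{10}$ has no vertex of degree $9$, so a general $\mathcal U$ statement is avoided and the explicit seed is used instead.
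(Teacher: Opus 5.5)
Your proof is correct and takes essentially the paper's route. You recover $X$ from $\mathcal T(X)$ by locating the two added vertices intrinsically: you start from the unique degree-one vertex and take its neighbour, whereas the paper starts from the unique universal vertex and then removes the unique isolated vertex of what remains. For $\mathcal U(G_{10}),\mathcal U(H_{10})$ you argue, as the paper does, that the new vertex is the only universal one because both seeds have maximum degree $5$; your alternative check via vertex $6$ (neighbour degrees $\{6,4,10\}$ versus $\{5,5,10\}$) is also correct.
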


\begin{proof}
In \(\mathcal T(X)=(X\cup K_1)\vee K_1,\) the last added vertex is universal. Every vertex of $X$ is nonadjacent to the isolated vertex introduced before the join, while that isolated vertex is adjacent only to the last added vertex. Hence the last added vertex is the unique universal vertex. Removing it leaves $X\cup K_1$; since $X$ is connected and has at least two vertices, the added $K_1$ is the unique isolated component. Thus $X$ can be recovered from $\mathcal T(X)$ up to isomorphism. For $\mathcal U(G_{10})$ and $\mathcal U(H_{10})$, the new vertex is again uniquely universal because neither seed has a universal vertex. Deleting it would otherwise force $G_{10}\cong H_{10}$.
\end{proof}

\begin{proof}[Proof of Theorem~\ref{thm:order-dichotomy}]
The necessity $n\ge10$ is Proposition~\ref{prop:no-small-order}. It remains to produce examples for every $n\ge10$.

For even orders, set
\(
G_{10+2k}=\mathcal T^k(G_{10}),
H_{10+2k}=\mathcal T^k(H_{10}) (k\ge0).
\)
Lemma~\ref{lem:UT-reciprocal} and Proposition~\ref{prop:reciprocal-walks} give equal walk matrices. Each application of $\mathcal T$ adds a universal vertex, so the graphs remain connected. Proposition~\ref{prop:rank-lifting}, together with the seed values above, gives
\[
\operatorname{rank}W_{G_{10+2k}}=8+2k=(10+2k)-2,
\]
and the same holds for $H_{10+2k}$. Lemma~\ref{lem:noniso-operations} preserves non-isomorphism.

For odd orders, begin with \(G_{11}=\mathcal U(G_{10}), H_{11}=\mathcal U(H_{10}).\) These graphs are connected and non-isomorphic. By Lemma~\ref{lem:UT-reciprocal} and Proposition~\ref{prop:reciprocal-walks}, \(W_{G_{11}}=W_{H_{11}}.\) Moreover, the same equality remains valid after every subsequent application of $\mathcal T$. 

Since \(m_0(-1)+h_0(-1)=4\ne0,\) Proposition~\ref{prop:rank-lifting} gives walk rank $9$. The new reduced coronal satisfies
\[
m_1(0)=-h_0(0)=12,
\quad
h_1(-1)=m_0(-1)+h_0(-1)=4,
\]
so $\mathcal T$ can now be iterated. Define \(G_{11+2k}=\mathcal T^k(G_{11}), H_{11+2k}=\mathcal T^k(H_{11}).\) Their common walk rank is $9+2k=(11+2k)-2$; connectedness and non-isomorphism persist. This completes the proof.
\end{proof}

\section{Further Questions}
Two questions remain especially natural. First, suppose \(\operatorname{rank}W_G=n-r, r\ge 3,\) and \(W_G=W_H\). The proof of Lemma~\ref{lem:difference-rank-two} still gives
\(\operatorname{rank}(A_G-A_H)\le r.\) For \(r=2\), zero diagonal together with \(\{0,\pm1\}\)-entries forces the complete-bipartite sign pattern exploited above. At higher
rank that rigidity disappears. Is there nevertheless a finite list of switching mechanisms governing walk-matrix ambiguity for each fixed corank?

Second, there is a broader fixed-space problem behind the preceding arguments. More generally, if \(B=Q^TAQ, Q\mathbf e=\mathbf e,\) with $Q$ orthogonal, then \(W(B)=Q^TW(A).\) Hence $W(B)=W(A)$ if and only if $Q$ fixes the Krylov space \(\mathcal K_G=\operatorname{col}W(A)\) pointwise. This suggests studying regular orthogonal transformations
that fix a prescribed Krylov space and, at the same time, conjugate an adjacency matrix to another adjacency matrix. Such a viewpoint may provide a natural framework for walk-matrix equivalence beyond corank two. Recent work on regular-orthogonal switching indicates that substantially richer behaviour should occur~\cite{MaoWangLiuQiu2023,AbiadVanDeBergSimoens2026}.

\appendix

\section{Exact verification for the order-\texorpdfstring{\(10\)}{10} seed}\label{app:seed-verification}

This appendix gives exact certificates for the computations used in Sections~\ref{sec:minorder} and~\ref{sec:families}. Let \(A=A(G_{10})\), where \(G_{10}\) is the graph defined in Section~\ref{sec:minorder}, and let \(\mathbf e\) be the all-one vector.

Set \(\chi_{10}(x):=\det(xI-A).\) A direct determinant calculation gives
\[
\chi_{10}(x)=(x^2+x-4)
(x^8-x^7-13x^6-x^5+38x^4+12x^3-29x^2-5x+4).
\]
Thus \(\chi_{10}(x)=(x^2+x-4)m_0(x),\) where
\[
m_0(x)=x^8-x^7-13x^6-x^5+38x^4+12x^3-29x^2-5x+4.
\]

To compute the coronal, use the matrix determinant lemma:
\[
\det(xI-A+\mathbf e\mathbf e^T)-\det(xI-A)=
\det(xI-A)\,
\mathbf e^T(xI-A)^{-1}\mathbf e.
\]
The left-hand side factors as
\[
\begin{aligned}
&\det(xI-A+\mathbf e\mathbf e^T)-\det(xI-A)\\
&\quad =
(x^2+x-4)
\bigl(
10x^7+26x^6-30x^5-102x^4
+2x^3+86x^2+8x-12
\bigr).
\end{aligned}
\]
Hence
\[
    \Gamma_{G_{10}}(x)
    =
    \frac{h_0(x)}{m_0(x)},
\]
where
\[
    h_0(x)
    =
    10x^7+26x^6-30x^5-102x^4
    +2x^3+86x^2+8x-12.
\]

The fraction is reduced. Indeed, the resultant of $m_0$ and $h_0$ satisfies
\[
\operatorname{Res}(m_0,h_0)=36864\neq0,
\]
and therefore \(\gcd(m_0,h_0)=1.\)  In particular,
\[
m_0(0)=4,\quad
m_0(-1)=-4,\quad
h_0(0)=-12,\quad
h_0(-1)=8,
\]
which are precisely the values used in the rank-lifting argument.

The rank certificate used in Proposition~\ref{prop:order-ten} is as follows. The submatrix of
\[
[e,Ae,\ldots,A^7e]
\]
formed by the rows
\[
1,2,3,5,6,7,9,10
\]
is
\[
M=
\begin{pmatrix}
1&5&18&71&266&1014&3829&14511\\
1&3&10&37&138&520&1965&7427\\
1&4&13&50&184&699&2631&9965\\
1&4&16&60&228&862&3262&12346\\
1&2&8&28&108&404&1534&5794\\
1&3&12&44&169&637&2417&9142\\
1&4&16&59&225&845&3204&12101\\
1&4&16&61&233&883&3348&12671
\end{pmatrix}.
\]
A direct calculation gives $\det M=48\neq0$. Therefore \(\operatorname{rank}[e,Ae,\ldots,A^7e]\ge 8,\) and hence \(\operatorname{rank}W_{G_{10}}\ge 8.\) On the other hand, Proposition~\ref{prop:reciprocal-walks} gives \(\operatorname{rank}W_{G_{10}}\le 8.\) Consequently, \(\operatorname{rank}W_{G_{10}}=8.\)

\end{document}